\documentclass[a4paper,12pt]{amsart}
\usepackage{amssymb}
\usepackage[utf8]{inputenc}
\usepackage{amsmath}
\usepackage{stmaryrd}
\usepackage{amscd,amsthm,amssymb}
\usepackage{enumerate}
\usepackage{enumitem}
\usepackage{color}
\usepackage{comment}
\usepackage{latexsym}
\usepackage[hidelinks]{hyperref}

\scrollmode
\newtheorem{epr}{Proposition}[section]
\newtheorem{ath}[epr]{Theorem}
\newtheorem{elem}[epr]{Lemma}

\theoremstyle{definition}

\newtheorem{ere}[epr]{Remark}

\newcommand{\CK}{\mathcal{T}}
\newcommand{\Kf}{\mathcal{K}}
\newcommand{\Pf}{\mathcal{P}}
\newcommand{\g}{\mathfrak g}
\newcommand{\kalg}{\mathfrak k}
\newcommand{\m}{\mathfrak m}
\newcommand{\R}{\mathbb R}
\newcommand{\CP}{\mathbb{CP}}
\newcommand{\HP}{\mathbb{HP}}
\newcommand{\SO}{\mathrm{SO}}
\newcommand{\Sp}{\mathrm{Sp}}

\newcommand{\U}{\mathrm{U}}
\newcommand{\Sym}{\mathrm{Sym}}
\newcommand{\lrc}{\mathbin{\lrcorner}}
\DeclareMathOperator{\Ric}{Ric}

\DeclareMathOperator{\ad}{ad}

\DeclareMathOperator{\tr}{tr}

\title{Conformal Killing $2$-Forms on Compact Symmetric Spaces}

\author{Andrei Moroianu}
\address{Andrei Moroianu \\ Universit\'e Paris-Saclay, CNRS, Laboratoire de math\'ematiques d'Orsay, 91405 Orsay, France, and Institute of Mathematics ``Simion Stoilow'' of the Romanian Academy, 21 Calea Grivitei, 010702 Bucharest, Romania}
\email{andrei.moroianu@math.cnrs.fr}

\author{Uwe Semmelmann}
\address{Uwe Semmelmann, Institut f\"ur Geometrie und Topologie, Fachbereich Mathematik, Universit{\"a}t Stuttgart, Pfaffenwaldring 57, 70569 Stuttgart, Germany}
\email{uwe.semmelmann@mathematik.uni-stuttgart.de}

\date{\today}

\begin{document}

\begin{abstract}
We show that a simply connected compact symmetric space of dimension $n\ge 3$ admits a conformal Killing $2$-form which is not Killing if and only if it is homothetic to one of $\mathbb{S}^n$, $\CP^m$ or $\HP^q$. 
\end{abstract}

\subjclass[2020]{53C25, 53C35}
\keywords{conformal Killing forms, twistor forms, compact symmetric spaces, invariant $4$-forms}

\maketitle

\section{Introduction}

Let $(M^n,g)$ be a Riemannian manifold.  A differential $p$-form $\psi$ is called a
\emph{conformal Killing form}, or \emph{twistor form}, if
\begin{equation}\label{eq:twistor}
 \nabla_X\psi
 =
 \frac{1}{p+1}X\lrc d\psi
 -
 \frac{1}{n-p+1}X^\flat\wedge\delta\psi
\end{equation}
for every vector field $X$, where $\nabla$ denotes the Levi-Civita connection of $g$.  This notion is relevant only for $1\le p\le n-1$, since \eqref{eq:twistor} is tautologically satisfied by any $0$- or $n$-form.

A conformal Killing form is called a \emph{Killing form}
if $\delta\psi=0$ and a \emph{$*$-Killing form} if $d\psi=0$.  The Hodge star preserves the space of conformal Killing forms and
interchanges Killing forms with  $*$-Killing forms. Note that parallel forms are trivial solutions of \eqref{eq:twistor}.

Killing forms on compact simply connected symmetric spaces were classified in
\cite{BelgunMoroianuSemmelmann}.  The main result there states that a compact simply
connected symmetric space carries a non-parallel Killing $p$-form, $p\ge 2$, if and only
if it has a round sphere factor $\mathbb{S}^k$ with $k>p$.  In particular, on an irreducible
simply connected compact symmetric space which is not a sphere, every Killing form of degree at least $2$
is parallel.

It is therefore natural to ask whether every conformal Killing form on a compact symmetric
space is a sum of a Killing form and a $*$-Killing form, equivalently a sum
of a Killing form and the Hodge dual of a Killing form.  This holds on the round spheres but is false already on
$\CP^m$, where nonconstant Killing potentials give rise to
non-parallel conformal Killing $2$-forms \cite{MoroianuSemmelmannKahler}. Since Killing
forms of degree at least $2$ on compact K\"ahler manifolds are parallel, these examples
cannot be sums of a Killing and a $*$-Killing form.  Likewise, $\HP^q$ carries strict
conformal Killing $2$-forms, and in fact the existence of such a form characterizes
$\HP^q$ among compact quaternionic-K\"ahler manifolds of dimension at least eight
\cite{DavidPontecorvo}.

The purpose of this note is to classify compact simply connected symmetric spaces carrying non-parallel conformal Killing 2-forms, by isolating the algebraic mechanism behind the above exceptions. Our main result is the following:

\begin{ath}\label{thm:conditional}
Let $(M,g)$ be a compact simply connected 
Riemannian symmetric space of dimension $n\ge 3$.  If $M$ admits a conformal Killing
$2$-form which is not Killing, then, up to homothety,
\[
 M=\mathbb{S}^n,\qquad M=\CP^m,\qquad\text{or}\qquad M=\HP^q.
\]
\end{ath}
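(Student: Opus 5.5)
Let $\psi$ be a conformal Killing $2$-form which is not Killing, so $\xi:=\delta\psi\neq 0$. The plan is to take $\xi^\sharp$ as the basic object and show it can only exist when the curvature has the shape of a space form or a projective space.

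The first step is the standard Weitzenböck analysis of twistor forms. Differentiating \eqref{eq:twistor} and skew-symmetrising gives $d\delta\psi$ and $\delta d\psi$ in terms of curvature acting on $\psi$. For $p=2$ the well-known consequence is that $\xi^\sharp$ is a Killing vector field, up to a constant, on Einstein manifolds. One first reduces to this setting. If $M$ is a product, we decompose $\psi$ according to the splitting of forms on the factors. The mixed $(1,1)$-components and the components on single factors then satisfy conformal Killing equations with respect to the product geometry. A short argument shows that a $1$-form $\xi$ arising this way must be supported on one factor, and that this forces $M$ to be irreducible unless $\psi$ is Killing. So we may assume $M=G/K$ is irreducible, hence Einstein, with $\xi$ a Killing $1$-form. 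In that case $\xi$ corresponds to an element of $\g$.

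Next, compute $\nabla\xi$ from \eqref{eq:twistor}. Up to constants, $d\xi$ is a combination of $\Ric(\psi)$ and the curvature operator $R(\psi)$ acting on $2$-forms, that is, $q(R)\psi$. Since $\xi$ is Killing, $\nabla_X d\xi = 2R(X,\xi)$, up to normalisation. Substituting the twistor equation for $\nabla\psi$ into the derivative of $d\xi=c\,q(R)\psi$ and using $\nabla R=0$, we obtain an algebraic identity at each point. It has the form
\[
R(X,\xi^\sharp) = \lambda\, R(X^\flat\wedge\xi) + \mu\, X\lrc (\text{curvature applied to } d\psi),
\]
valid for all $X$. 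The $d\psi$ part is handled the same way: $d\psi$ is a conformal Killing $3$-form that is closed, and one shows its contribution vanishes or factors in the same way. The upshot is the following statement: for a nonzero vector $v=\xi^\sharp$ at some point, the map $X\mapsto R(X,v)\in\Lambda^2$ coincides with $X\mapsto c\,\mathcal R(X\wedge v)$ for a constant $c$ independent of the point. Because $\xi$ is a Killing field not identically zero, $v$ ranges over an open set of directions. Hence this holds for all $v$ in an open subset of $T_oM$, and by polynomiality for all $v$.

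The final step, which I expect to be the main obstacle, is the purely algebraic classification. We must determine the irreducible symmetric spaces whose curvature operator satisfies such an identity. Equivalently, the invariant $4$-form or invariant curvature-type tensor $R - c\,(\text{identity-type term})$ must be expressible through $K$-invariant $2$-forms on $\m$. The identity says that $R$ differs from $c\,\mathrm{id}_{\Lambda^2}$ by a tensor built from $\kalg$-invariant data. So I would reduce it to the statement that the isotropy representation admits a nontrivial $K$-invariant $4$-form, or a quaternionic structure, compatible with $R$. I would then run through Cartan's list using the holonomy representation. In the generic case $\Lambda^2\m$ contains $\kalg$ with multiplicity one and no other invariant piece of the right type, which forces $R$ to be constant curvature, giving $\mathbb{S}^n$. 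The cases where the extra invariant piece exists are when $K$ has a $\U(1)$ factor (Hermitian symmetric) or an $\Sp(1)$ factor (quaternion-Kähler symmetric). There the identity pins down the holomorphic, respectively quaternionic, sectional curvature as constant, giving $\CP^m$ and $\HP^q$. The delicate point will be excluding the other Hermitian and Wolf spaces (Grassmannians, etc.). I would do this by evaluating the identity on a pair of orthogonal vectors spanning a flat totally real plane, which exists when the rank is at least two. At such a pair $R(X,v)=0$, whereas the right-hand side is nonzero. This rank argument should dispose of everything except the rank-one spaces, and among those the Cayley plane would be excluded by the absence of invariant $2$-forms on $\m$ for $\mathrm{Spin}(9)$.
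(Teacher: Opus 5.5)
Your proposal has a genuine gap at its core. The pointwise identity that is supposed to constrain $R$ alone is never derived, and it cannot be derived the way you set things up. With a single form $\psi$, anything you get at a point $x$ by prolonging \eqref{eq:twistor} still involves the unknown values $\psi_x$ and $d\psi_x$, not only $v=\xi_x$. Already the Weitzenb\"ock identity $\frac23\delta d\psi+\frac{n-2}{n-1}d\delta\psi=q(R)\psi$ contains $\delta d\psi$, which you never control. The $d\psi$-terms are dismissed with ``one shows its contribution vanishes or factors'', and the claim that $d\psi$ is a closed conformal Killing $3$-form is unjustified. So there is no relation in $v$ alone, and letting $v$ vary proves nothing. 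Moreover, the identity you state is either a tautology (if $\mathcal R$ is the curvature operator, $R(X,v)$ \emph{is} $\mathcal R(X\wedge v)$). Or, read as $R(X,v)=c\,X\wedge v$, it is the constant-curvature condition, which fails on $\CP^m$ and $\HP^q$, where such forms exist.

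The missing idea is the mechanism that decouples these unknowns. Since $\Kf^2(M)=\Pf^2(M)$ off the spheres \cite{BelgunMoroianuSemmelmann}, $\delta$ is injective on $\Pf^2(M)^\perp\cap\CK^2(M)$. By equivariance and simplicity of $\g$ (or the geodesic-symmetry argument in the group-type case) it is an isomorphism onto $\g$. This gives an equivariant family $\psi_B$ with $\psi_Y(o)=0$ for $Y\in\m$ (via $s_o^*\psi_B=\psi_{\sigma B}$), $\nabla_X\psi_Y(o)=-P([X,Y])$, and $d\psi_Y(o)=Y\lrc\Omega$ for some $\Omega\in(\Lambda^4\m^*)^K$. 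The twistor equation at $o$ becomes $P\circ b=\frac1{n-1}\mathrm{id}-\frac13C_\Omega$. Restricting to $\ker b=\rho(\kalg)^\perp$ eliminates $P$ and leaves a condition on $\Omega$ alone.

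The other steps also need repair.
\begin{itemize}
\item Your product reduction cannot come from a local ``short argument''. $S^{n-1}\times S^1$ carries the non-Killing conformal Killing $2$-forms $*\pi_1^*\varphi$, with $\varphi$ a non-parallel Killing $(n-2)$-form on $S^{n-1}$. You need the global product theorem \cite{MoroianuSemmelmannProducts} together with the fact that no compact simply connected $1$-manifold exists.
\item Your trichotomy ($\U(1)$ factor, $\Sp(1)$ factor, generic) misses the genuine real and quaternionic Grassmannians. These have non-simple isotropy and non-zero invariant $4$-forms.
\item The claim that the identity ``pins down'' the holomorphic or quaternionic sectional curvature is unsupported. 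The paper needs \cite{CalderbankMatveevRosemann} and \cite{DavidPontecorvo} for this.
\item Absence of invariant $2$-forms cannot be what excludes $\mathbb{O}P^2$, since $\HP^q$ has none either.
\end{itemize}

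That said, your flat-plane idea is sound once it is placed in the right framework. Let $X,Y\in\m$ be linearly independent with $[X,Y]=0$. Then \eqref{eq:homogeneous-derivative} gives $\nabla_X\psi_Y(o)=0$. Evaluating \eqref{eq:CK2} at $o$ on $(X,Y)$ gives
\[
0=\tfrac13\,d\psi_Y(o)(X,X,Y)-\tfrac1{n-1}|X\wedge Y|^2=-\tfrac1{n-1}|X\wedge Y|^2 ,
\]
so rank $\ge2$ is excluded outright. Among compact rank-one spaces, only $\mathbb{O}P^2$ is then not on the list. It is ruled out because its isotropy $\mathfrak{spin}(9)$ is simple, so $(\Lambda^4\m^*)^K=0$, while $\Omega\neq0$. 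This route would bypass the Grassmannian computations and Section~6 of the paper.
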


The starting point is the prolongation theory of conformal Killing forms
\cite{Semmelmann}.  On a locally symmetric space all curvature terms are parallel, so
the prolongation becomes a finite-dimensional algebraic problem determined by the isotropy representation.
For $p=2$ one can extract from this system a particularly simple invariant: a parallel
$4$-form whose action on $\Lambda^2TM$ has a prescribed eigenvalue on the orthogonal
complement of the holonomy algebra.

More precisely, on an irreducible simply connected compact symmetric space $M=G/K$ with non-constant sectional curvature with simple transvection algebra $\g$, one can show using the results in \cite{BelgunMoroianuSemmelmann} that there is a one-to-one correspondence between conformal Killing
$2$-forms modulo parallel forms, and Killing vector fields. Evaluating the conformal Killing equation at
a base point produces a $K$-invariant $4$-form $\Omega\in\Lambda^4\m^*$ on the isotropy space
$\m$ satisfying the spectral condition
\[
 C_\Omega=\frac{3}{n-1}\operatorname{id}
 \qquad\text{on }\rho(\mathfrak k)^\perp,
\]
where \[
 \rho:\kalg\longrightarrow\Lambda^2\m
\]
denotes the isotropy representation, and $C_\Omega:\Lambda^2\m\to \Lambda^2\m$ is the natural symmetric endomorphism defined by $\Omega$.
  The K\"ahler and
quaternionic-K\"ahler rigidity arguments then single out $\CP^m$ and $\HP^q$ among the
non-spherical cases.  The only irreducible compact symmetric spaces not covered by the
simplicity assumption are the spaces of group type. They are treated separately in
Section~\ref{subsec:group-type}.

The main new reduction obtained below is Proposition~\ref{prop:omega-condition}.  It
should be regarded as the conformal-Killing analogue of the algebraic step in
\cite{BelgunMoroianuSemmelmann} which reduces the Killing-form equation to a statement
about invariant subspaces of exterior powers.

In principle one could try to extend the approach in the present paper to the study of conformal Killing $p$-forms on symmetric spaces for $p\ge3$. However, in this case $\delta \psi$ is not necessarily a Killing $(p-1)$-form, as is the case for $p=2$, so new arguments have to be found in order to deal with the general case.

{\sc Acknowledgements.}
We would like to thank Anand Dessai for sharing with us his expertise on the topology of the Grassmannians.
The second named author thanks ChatGPT by OpenAI for assistance in exploring the prolongation
argument leading to the invariant $4$-form condition, and for help in organizing and
drafting an early version of this manuscript.  The mathematical arguments 
have been thoroughly checked and rewritten by the authors.

\section{Preliminaries}\label{sec:preliminaries}

We denote by
\[
 \CK^p(M),\qquad \Kf^p(M),\qquad \Pf^p(M)
\]
the spaces of conformal Killing, Killing, and parallel $p$-forms, respectively.

Although we will not use this here, we mention that a $p$-form $\psi$ is conformal Killing, i.e. satisfies \eqref{eq:twistor}, if and only if the projection of $\nabla\psi$ onto the Cartan summand of 
 $T^*M\otimes\Lambda^pT^*M$ vanishes. This projection is called the {\em twistor operator}.

Taking the covariant derivative of \eqref{eq:twistor}, skew-symmetrizing,
and contracting shows that the symmetric part of $\nabla(\delta\psi)$ vanishes
if the metric $g$ is Einstein. Hence, we have the following important fact (see \cite{Ta69}, Thm. 4).

\begin{elem}\label{lem:divergence-killing}
Let $(M^n,g)$ be Einstein and let $\psi\in\CK^2(M)$.  Then $\delta\psi\in \Kf^1(M)$, that is, the $1$-form $\delta\psi$ is dual to a Killing vector field.
\end{elem}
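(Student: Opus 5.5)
We have to show that the $1$-form $\xi:=\delta\psi$ satisfies $\nabla\xi\in\Lambda^2$, i.e.\ that the symmetric part of $\nabla\xi$ vanishes. The plan is to differentiate \eqref{eq:twistor} once more and use the curvature identity for second derivatives. Write the equation for $p=2$ as
\[
\nabla_X\psi=\tfrac13 X\lrc d\psi-\tfrac1{n-1}X^\flat\wedge\xi .
\]
Set $\sigma:=d\psi$, a $3$-form, and work in a local orthonormal frame $e_i$, parallel at the point under consideration.

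\textbf{Step 1: $d\xi$ and $\delta\sigma$.} Differentiating once more gives
\[
\nabla^2_{X,Y}\psi=\tfrac13 Y\lrc\nabla_X\sigma-\tfrac1{n-1}Y\wedge\nabla_X\xi .
\]
We compute the rough Laplacian $\nabla^*\nabla\psi=-\sum_i\nabla^2_{e_i,e_i}\psi$ from this. Two contractions occur:
\begin{itemize}
\item $\sum_i e_i\lrc\nabla_{e_i}\sigma=-\delta\sigma$;
\item $\sum_i e_i\wedge\nabla_{e_i}\xi=d\xi$.
\end{itemize}
This yields
\[
\nabla^*\nabla\psi=\tfrac13\delta\sigma+\tfrac1{n-1}d\xi .
\]
We then compare with the Weitzenböck formula on $2$-forms,
\[
\Delta\psi=d\delta\psi+\delta d\psi=d\xi+\delta\sigma=\nabla^*\nabla\psi+q(R)\psi,
\]
where on an Einstein manifold with $\Ric=\lambda g$ we have $q(R)\psi=2\lambda\psi-\mathcal R(\psi)$, with $\mathcal R$ the curvature operator. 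This gives a first linear relation between $d\xi$, $\delta\sigma$ and curvature terms in $\psi$.

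\textbf{Step 2: the symmetric part of $\nabla\xi$.} Now take $\xi=\delta\psi=-\sum_j e_j\lrc\nabla_{e_j}\psi$ and compute $\nabla_X\xi=-\sum_j e_j\lrc\nabla^2_{X,e_j}\psi$. We use the Ricci identity
\[
\nabla^2_{X,e_j}-\nabla^2_{e_j,X}=R(X,e_j)
\]
to swap the order of differentiation, and then insert the formula for $\nabla^2_{e_j,X}\psi$ from Step 1. We then look only at the symmetric part in $(X,Y)$ of $g(\nabla_X\xi,Y)$:
\begin{itemize}
\item The $\sigma$-contributions $\sum_j e_j\lrc X\lrc\nabla_{e_j}\sigma$ are skew in the relevant slots and drop out.
\item The term $\sum_j e_j\lrc(X\wedge\nabla_{e_j}\xi)$ produces $(\delta\xi)X-\nabla_X\xi$.
\item The curvature term is $\sum_j e_j\lrc R(X,e_j)\psi$. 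For Einstein metrics it reduces via the Bianchi identity to $\Ric$ acting on $\psi$, giving $\lambda\, X\lrc\psi$ plus a term $\mathcal R$-type that is skew in $(X,Y)$. Hence its symmetric part vanishes; this is exactly where the Einstein hypothesis enters.
\end{itemize}
Altogether one obtains an identity of the form
\[
\bigl(1-\tfrac1{n-1}\bigr)\operatorname{Sym}(\nabla\xi)=c\,(\delta\xi)\,g
\]
for some constant $c$.

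\textbf{Step 3: traceless part and trace.} For $n\ge3$ the coefficient $1-\tfrac1{n-1}$ is nonzero, so the traceless symmetric part of $\nabla\xi$ vanishes and $\xi^\sharp$ is a conformal Killing vector field. It remains to show $\delta\xi=0$. Since $\xi=\delta\psi$, we have
\[
\delta\xi=\delta\delta\psi=0 .
\]
So the trace part vanishes automatically, which already settles the trace term in Step 2 regardless of the value of $c$. Hence $\operatorname{Sym}(\nabla\xi)=0$, and $\xi\in\Kf^1(M)$.

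\textbf{Main obstacle.} The delicate point is Step 2: keeping the signs and the factors $\tfrac13$ and $\tfrac1{n-1}$ straight, and verifying that the $d\psi$-terms and the curvature terms contribute nothing symmetric. For the curvature contraction the first thing to try is the pair symmetry of $R$ together with the Bianchi identity, which should collapse it to $\Ric\circ\psi$. Einstein then makes that term a multiple of $\psi$, which is skew.
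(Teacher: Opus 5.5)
Your argument is correct and is essentially the paper's proof (following Tachibana): differentiate the conformal Killing equation, swap the order of derivatives with the Ricci identity, and contract. The $d\psi$-term and the curvature-operator term are then skew in $(X,Y)$, the Ricci term is skew precisely because $g$ is Einstein, and $\delta\delta\psi=0$ kills the trace; your Step 1 is never used and can be dropped. One sign slip: $\sum_j e_j\lrc(X\wedge\nabla_{e_j}\xi)=\nabla_X\xi+(\delta\xi)X$, not $(\delta\xi)X-\nabla_X\xi$, and it is this correct sign that actually produces your coefficient $1-\frac{1}{n-1}=\frac{n-2}{n-1}$, which is nonzero for $n\ge 3$.
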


Examples of non-parallel conformal Killing $2$-forms were previously constructed on $\mathbb{S}^n, \CP^m$ and $\HP^q$. We give here a short account.

On $\mathbb{S}^n, n\ge 3,$ the space of conformal Killing $2$-forms has the maximal possible dimension, i.e. $\binom{n+2}{3}$. Any such form is a sum of a Killing $2$-form and a $\ast$-Killing $2$-form, which itself are
eigenforms for the  minimal eigenvalue of the Laplace operator restricted to 
closed respectively coclosed $2$-forms (see \cite[Prop. 3.2]{Semmelmann}).

Conformal Killing $2$-forms on $\CP^m$ were described in \cite[Prop. 6.1]{MoroianuSemmelmannKahler}. Let $\CP^m$ be
equipped with the Fubini-Study metric $g_{FS}$, i.e. $\Ric = 2(m+1)g_{FS}$. Then every non-parallel conformal Killing $2$-form can be written as
$\psi_f := d d^c f + 6 f \omega$, where $\omega$ is the K\"ahler form
and $f$ is a Laplace eigenfunction with $\Delta f = 4(m+1)f$. Recall that
$4(m+1)$ is the minimal eigenvalue of $\Delta$ on non-constant function and that any Killing vector field on $(\CP^m, g_{FS})$ is of the form
$K_f := J\mathrm{grad} f$, for such a minimal eigenfunction $f$. An easy 
calculation shows $\delta \Psi_f = 2(m-1)K_f$.

The description of conformal Killing $2$-forms on $\HP^q$  ($q\ge 2$) is given
in \cite[Thm. 1]{DavidPontecorvo}. The authors show that there is an
isomorphism between the space of Killing vector fields and the space
of conformal Killing $2$-forms, with inverse given by the codifferential
$\delta$. The isomorphism is defined by an explicit linear combination of 
the two projections of the covariant derivative of a Killing vector field $K$
onto the subbundles $\mathrm{Sym}^2H$ and $\mathrm{Sym}^2E$ of $\Lambda^2 TM$.

\section{The case of irreducible symmetric spaces}

Let
$
 (M=G/K, g)
 $
be a compact simply connected irreducible Riemannian symmetric space, with symmetric
decomposition
\[
 \g=\kalg\oplus\m.
\]
In view of the above discussion, we will assume that $(M,g)$ is not a round sphere.
We identify $\m$ with $T_oM$ at the base point $o=eK$ and equip $\g$ with an
$\ad(G)$-invariant inner product inducing the Riemannian metric $g$. Then 
$\g$ is isomorphic to the space of Killing vector fields of $g$.

Since $(M,g)$ is not a round sphere, the classification
of Killing forms on symmetric spaces \cite[Thm. 1.1]{BelgunMoroianuSemmelmann} gives
\begin{equation}\label{eq:killing-parallel}
 \Kf^2(M)=\Pf^2(M).
\end{equation}
Thus Lemma~\ref{lem:divergence-killing} yields an injective map into the space of Killing vector fields
\begin{equation}\label{eq:delta-injection}
 \bar\delta:\CK^2(M)/\Pf^2(M)\longrightarrow\g,
 \qquad [\psi]\longmapsto(\delta\psi)^\sharp.
\end{equation}

For clarity we consider first the case where $\g$ is simple.
The only irreducible compact symmetric spaces $G/K$ of compact type with non-simple
transvection algebra $\g$ are the spaces of group type, that is, with $G=K\times K$. These are treated separately in Section~\ref{subsec:group-type}.

Since the map defined in \eqref{eq:delta-injection} is $G$-equivariant and the adjoint representation of a
simple Lie algebra is irreducible as a $G$-module, we obtain:

\begin{epr}\label{prop:delta-iso}
Assume that the symmetric space $M=G/K$ is not a sphere and that $\g$ is simple.  If
$\CK^2(M)\ne\Pf^2(M)$, then the restriction of $\delta$ to
the $L^2$-orthogonal complement of
$\Pf^2(M)$,
\[
 \delta:\Pf^2(M)^\perp\cap\CK^2(M)\longrightarrow\g
\]
is a $G$-equivariant isomorphism.
\end{epr}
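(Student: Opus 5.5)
The plan is to combine the injectivity of \eqref{eq:delta-injection} with Schur's lemma. First I will check that everything is $G$-equivariant. The isometry group $G$ acts on differential forms by pullback. Since the Levi-Civita connection, $d$ and $\delta$ are natural with respect to isometries, this action preserves $\CK^2(M)$, $\Kf^2(M)$ and $\Pf^2(M)$. It also preserves the $L^2$ inner product, so it preserves the complement $V:=\Pf^2(M)^\perp\cap\CK^2(M)$.

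On Killing vector fields the pullback action is the adjoint action. Indeed, $(\phi^{-1})^*$ applied to the Killing field $X_\xi$ generated by $\xi\in\g$ gives the field generated by $\ad(\phi)\xi$, possibly up to the usual sign convention, which does not affect equivariance. Moreover, $\delta$ commutes with isometries, and so does $\sharp$. Hence $\delta|_V\colon V\to\g$ is $G$-equivariant. Lemma~\ref{lem:divergence-killing} guarantees that it lands in the Killing vector fields, which are identified with $\g$.

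Next I show that $\delta|_V$ is injective. Since $\CK^2(M)$ is finite dimensional (prolongation, \cite{Semmelmann}), we have the decomposition $\CK^2(M)=\Pf^2(M)\oplus V$, so $V\cong \CK^2(M)/\Pf^2(M)$. Under this identification $\delta|_V$ coincides with $\bar\delta$, because parallel forms are coclosed.

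The map $\bar\delta$ is injective. If $\delta\psi=0$ for some $\psi\in\CK^2(M)$, then $\psi$ is Killing by definition. By \eqref{eq:killing-parallel} it is then parallel, and this step uses that $M$ is not a round sphere.

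Now I use irreducibility. Since $\g$ is simple, the adjoint representation of the connected group $G$ on $\g$ is irreducible, since a $G$-invariant subspace is an ideal. The image $\delta(V)$ is a $G$-invariant subspace of $\g$, so it is either $0$ or $\g$. By hypothesis $\CK^2(M)\neq\Pf^2(M)$, so $V\neq0$. Injectivity then gives $\delta(V)\neq 0$, hence $\delta(V)=\g$. Therefore $\delta|_V$ is an equivariant linear bijection, that is, a $G$-equivariant isomorphism.

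The only delicate point is the finite-dimensionality needed to speak of an orthogonal complement that is isomorphic to the quotient. If one prefers to avoid prolongation, it suffices that $\Pf^2(M)$ is finite dimensional, since a parallel form is determined by its value at one point. Then $\CK^2(M)=\Pf^2(M)\oplus V$ holds algebraically. Injectivity of $\bar\delta$ then forces $\dim V\le\dim\g$ anyway, so $V$ is finite dimensional.
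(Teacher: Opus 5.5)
Your proposal is correct and follows the paper's argument. Injectivity comes from $\Kf^2(M)=\Pf^2(M)$; the image is then a non-zero $G$-invariant subspace, hence an ideal, hence all of $\g$ by simplicity. Your checks of equivariance and of the splitting $\CK^2(M)=\Pf^2(M)\oplus V$ make explicit what the paper leaves implicit.
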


\begin{proof}
The kernel is zero by \eqref{eq:killing-parallel}.  The image is a non-zero
$G$-invariant subspace of $\g$, hence an ideal.  Simplicity of $\g$ implies that the
image is all of $\g$.
\end{proof}

For every $B\in\g$ we therefore write $\psi_B$ for the unique conformal Killing $2$-form in the chosen
complement satisfying
\begin{equation}\label{eq:delta-normalization}
 (\delta\psi_B)^\sharp=B^*,
\end{equation}
where $B^*$ is the Killing field of $(M,g)$ determined by $B$.

\section{The invariant 4-form}

This section contains the key construction of an invariant
$4$-form associated to the space $\CK^2(M)/\Pf^2(M)$ of conformal Killing $2$-forms modulo parallel forms.

Let $\sigma$ be the involution of the symmetric pair.  Thus
\[
 \sigma|_{\kalg}=+\operatorname{id},\qquad
 \sigma|_{\m}=-\operatorname{id}.
\]
and $\sigma$ acts as 
$(-1)^r$ on $\Lambda^r\m^*$. The construction of \(\psi_B\) is natural under isometries. Hence, the geodesic symmetry \(s_o\), with \((ds_o)_o =\sigma\), satisfies
$
s_o^* \psi_B = \psi_{\sigma B}    \, .
$
This shows that
there are $K$-equivariant maps
\begin{equation}\label{eq:PQ}
 P:\kalg\longrightarrow\Lambda^2\m^* \, ,
 \qquad
 Q:\m\longrightarrow\Lambda^3\m^*
\end{equation}
such that
\[
 \psi_A(o)=P(A)\,,\qquad
 d\psi_A(o)=0\qquad\forall A\in\kalg \, ,
\]
and
\[
 \psi_Y(o)=0\, ,\qquad
 d\psi_Y(o)=Q(Y)\qquad \forall Y\in\m \, .
\]

The $G$-equivariance of the map
$B\mapsto\psi_B$ means that, for the left action $L_g$ of $G$ on $M$,
\[
 L_g^*\psi_B=\psi_{\operatorname{Ad}_{g^{-1}}B}.
\]

Taking $g= \exp(tX)$ for some $X \in \m$ and $B=Y \in \m$ and differentiating in
$t=0$ gives
\[
 \mathcal L_{X^*}\psi_Y=-\psi_{[X,Y]},
\]
where $X^*$ denotes as before the Killing vector field of $M$ determined by $X$.
Since $X^*(o)=X$ and $\psi_Y(o)=0$ for $Y\in\m$, the Lie derivative and the
covariant derivative agree at $o$:
\[
 (\mathcal L_{X^*}\psi_Y)(o)=(\nabla_{X^*}\psi_Y)(o).
\]
Indeed, the difference between these two expressions is equal to the action of the endomorphism $(\nabla X^*)(o)$ of $T_oM$ on $\psi_Y(o)$.
Finally,
$[X,Y]\in\kalg$ and hence $\psi_{[X,Y]}(o)=P([X,Y])$, which,  together with the fact that $X^*(o)=X$  proves
\begin{equation}\label{eq:homogeneous-derivative}
 \nabla_X\psi_Y(o)=-P([X,Y]),
 \qquad X,Y\in\m.
\end{equation}

For $p=2$, the conformal Killing equation reads
\begin{equation}\label{eq:CK2}
 \nabla_X\psi
 =
 \frac13X\lrc d\psi
 -
 \frac1{n-1}X^\flat\wedge\delta\psi.
\end{equation}
Using \eqref{eq:delta-normalization} and evaluating \eqref{eq:homogeneous-derivative} at $o$ for $\psi=\psi_Y$, with $Y\in\m$,
we obtain
\begin{equation}\label{eq:key-PQ}
 -P([X,Y])
 =
 \frac13X\lrc Q(Y)
 -
 \frac1{n-1}X^\flat\wedge Y^\flat
\end{equation}
for every $X,Y\in\m$.

\begin{elem}\label{lem:Q-four-form}
There exists a $K$-invariant $4$-form
\[
 \Omega\in(\Lambda^4\m^*)^K
\]
such that
\[
 Q(Y)=Y\lrc\Omega
 \qquad\text{for all }Y\in\m.
\]
\end{elem}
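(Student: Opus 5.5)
The plan is to define $\Omega$ directly from $Q$ and to show that the resulting $4$-tensor is totally skew-symmetric. For this I set
\[
 T(Y,X,Z,W):=Q(Y)(X,Z,W),\qquad X,Y,Z,W\in\m .
\]
Since $Q(Y)\in\Lambda^3\m^*$, the tensor $T$ is already skew-symmetric in its last three arguments. The transposition of the first two slots, together with the permutations of the last three, generates the full symmetric group $S_4$. It therefore suffices to prove that $T$ is skew-symmetric in $(Y,X)$, that is,
\[
 X\lrc Q(Y)=-\,Y\lrc Q(X)\qquad\text{for all }X,Y\in\m .
\]

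This antisymmetry should follow directly from \eqref{eq:key-PQ}. Solving that identity for the contraction gives
\[
 X\lrc Q(Y)=-3P([X,Y])+\frac{3}{n-1}X^\flat\wedge Y^\flat .
\]
Exchanging $X$ and $Y$, I then use two facts: the bracket $[\cdot,\cdot]$ is antisymmetric, with $[X,Y]\in\kalg$, and $P$ is linear on $\kalg$. This yields
\[
 Y\lrc Q(X)=-3P([Y,X])+\frac{3}{n-1}Y^\flat\wedge X^\flat=3P([X,Y])-\frac{3}{n-1}X^\flat\wedge Y^\flat ,
\]
which is exactly $-X\lrc Q(Y)$. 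Hence $T$ is totally skew-symmetric, and I define $\Omega\in\Lambda^4\m^*$ by $\Omega(Y,X,Z,W):=T(Y,X,Z,W)$. By construction $Y\lrc\Omega=Q(Y)$.

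It remains to check $K$-invariance. Since $Q$ is $K$-equivariant by \eqref{eq:PQ}, for $k\in K$ we have $Q(\operatorname{Ad}_kY)=k\cdot Q(Y)$. Therefore
\[
 (k\cdot\Omega)(Y,X,Z,W)=\Omega(\operatorname{Ad}_{k^{-1}}Y,\dots)=Q(\operatorname{Ad}_{k^{-1}}Y)(\operatorname{Ad}_{k^{-1}}X,\dots)=(k^{-1}\cdot Q(Y))(\operatorname{Ad}_{k^{-1}}X,\dots)=Q(Y)(X,Z,W),
\]
so $\Omega\in(\Lambda^4\m^*)^K$.

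I do not expect a real obstacle here. The only point needing care is the observation that \eqref{eq:key-PQ} holds for all pairs $X,Y\in\m$, so that it can be symmetrized. Everything else is bookkeeping of signs and of the equivariance.
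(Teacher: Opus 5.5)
Your proposal is correct and matches the paper's proof: the paper also adds \eqref{eq:key-PQ} to its $X\leftrightarrow Y$ swap to get $X\lrc Q(Y)+Y\lrc Q(X)=0$, defines $\Omega(Y,X_1,X_2,X_3):=Q(Y)(X_1,X_2,X_3)$, and obtains $K$-invariance from the $K$-equivariance of $Q$. Your remarks that the transposition of the first two slots together with $S_3$ on the last three generates $S_4$, and your explicit equivariance computation, simply write out steps the paper leaves implicit.
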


\begin{proof}
Interchanging $X$ and $Y$ in \eqref{eq:key-PQ} and adding the two equations gives
\[
 X\lrc Q(Y)+Y\lrc Q(X)=0.
\]
Define
\[
 \Omega(Y,X_1,X_2,X_3)
 :=
 Q(Y)(X_1,X_2,X_3).
\]
Since $Q(Y)$ is alternating in $X_1,X_2,X_3$ and the displayed identity says
that $\Omega$ is also alternating in $Y$ and $X_1$, we conclude that $\Omega$ is a $4$-form.
Its $K$-invariance follows from the $K$-equivariance of the map $Q$.
\end{proof}

\begin{ere}
Recall that on a symmetric space $M = G/K$, there is a one to one correspondence between $K$-invariant forms on $\m$ and globally defined parallel forms on $M$.
\end{ere}

For a $4$-form $\Omega$ define the associated symmetric endomorphism
\[
 C_\Omega:\Lambda^2\m^*\longrightarrow\Lambda^2\m^*
\]
by
\begin{equation}\label{eq:COmega}
 C_\Omega(X^\flat\wedge Y^\flat):=X\lrc Y\lrc\Omega,
\end{equation}
extended linearly.  Also define the bracket map
\[
 b:\Lambda^2\m\longrightarrow\kalg,
 \qquad
 b(X\wedge Y)=[X,Y].
\]

Equation \eqref{eq:key-PQ} now becomes
\begin{equation}\label{eq:Pb}
 P\circ b
 =
 \frac1{n-1}\operatorname{id}
 -
 \frac13 C_\Omega.
\end{equation}

Let
$
 \rho:\kalg\longrightarrow\Lambda^2\m
$
denote the isotropy representation, where we use the metric to identify
$\mathfrak{so}(\m)$ with $\Lambda^2\m$.  Since the scalar product on $\g$ is
$\ad(G)$-invariant, the adjoint $b^*$ of the bracket map $b$ agrees with $\rho$. In
particular,
\begin{equation}\label{eq:kerb}
 \ker b=\rho(\kalg)^\perp.
\end{equation}

This observation leads to the following crucial result:

\begin{epr}\label{prop:omega-condition}
Let $M^n=G/K$ be an irreducible simply connected compact symmetric space with $\g$ simple.  Assume that $M$ is not a sphere and carries a non-parallel conformal
Killing $2$-form.  Then there exists a non-zero $K$-invariant $4$-form
$\Omega\in\Lambda^4\m^*$ such that
\begin{equation}\label{eq:omega-condition}
 C_\Omega
 =
 \frac{3}{n-1}\operatorname{id}
 \quad\text{on }\rho(\kalg)^\perp\subset\Lambda^2\m.
\end{equation}
\end{epr}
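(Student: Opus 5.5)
The plan is to combine Proposition~\ref{prop:delta-iso} with the algebraic identity \eqref{eq:Pb}. Since $M$ is not a sphere, $\g$ is simple and $\CK^2(M)\ne\Pf^2(M)$, Proposition~\ref{prop:delta-iso} gives the forms $\psi_B$ for all $B\in\g$ normalized by \eqref{eq:delta-normalization}. The $K$-equivariant maps $P$ and $Q$ of \eqref{eq:PQ} are then defined, and Lemma~\ref{lem:Q-four-form} produces the $K$-invariant $4$-form $\Omega$ with $Q(Y)=Y\lrc\Omega$. Identity \eqref{eq:key-PQ}, rewritten as \eqref{eq:Pb}, holds on all of $\Lambda^2\m$:
\[
P\circ b=\frac1{n-1}\operatorname{id}-\frac13 C_\Omega .
\]

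First I restrict \eqref{eq:Pb} to $\rho(\kalg)^\perp$. By \eqref{eq:kerb} this subspace is exactly $\ker b$, so the left-hand side vanishes there. This gives $\frac13 C_\Omega=\frac1{n-1}\operatorname{id}$ on $\rho(\kalg)^\perp$, which is precisely \eqref{eq:omega-condition}. Here I should check that $C_\Omega$, as defined in \eqref{eq:COmega}, matches the operator appearing in \eqref{eq:Pb}, i.e.\ that $X\lrc Q(Y)=X\lrc Y\lrc\Omega=C_\Omega(X^\flat\wedge Y^\flat)$. This is immediate from Lemma~\ref{lem:Q-four-form} together with the sign conventions for interior products. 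I also need the usual identification $\Lambda^2\m\cong\Lambda^2\m^*$ via the metric.

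It remains to show $\Omega\neq0$. Suppose $\Omega=0$. Then $C_\Omega=0$, and the identity would force $\frac1{n-1}\operatorname{id}=0$ on $\rho(\kalg)^\perp$. So it suffices to prove $\rho(\kalg)^\perp\ne0$, i.e.\ that $\rho(\kalg)\subsetneq\Lambda^2\m=\mathfrak{so}(\m)$. If $\rho(\kalg)=\mathfrak{so}(\m)$, the holonomy would be the full $\SO(n)$, and the symmetric space would have constant sectional curvature (the curvature operator is $\kalg$-invariant, and $\mathfrak{so}(n)$-invariant algebraic curvature tensors are multiples of the identity). Being simply connected and compact, it would then be a round sphere, which is excluded. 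Hence $\rho(\kalg)^\perp\neq0$ and $\Omega\ne0$.

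The main obstacle, mild as it is, is this last step: justifying that $\rho(\kalg)\ne\mathfrak{so}(\m)$ for non-spherical $M$. If the curvature-tensor argument above needs more care, the fallback is a dimension count. One has $\dim\kalg=\dim\mathfrak{so}(n)$ together with $\g=\kalg\oplus\m$ simple of dimension $\binom{n+1}{2}$, and this forces $\g=\mathfrak{so}(n+1)$, i.e.\ the sphere.
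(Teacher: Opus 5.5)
Your proposal is correct and is essentially the paper's own proof: restricting \eqref{eq:Pb} to $\ker b=\rho(\kalg)^\perp$ gives \eqref{eq:omega-condition}, and $\Omega=0$ would force $\rho(\kalg)=\mathfrak{so}(\m)$, hence constant curvature and a round sphere. Your Schur-type justification of the constant-curvature step is a useful extra detail; the dimension-count fallback is not conclusive on its own (e.g.\ $\dim\mathfrak{sp}(m)=\dim\mathfrak{so}(2m+1)$), but it is not needed.
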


\begin{proof}
Equation \eqref{eq:Pb} vanishes on $\ker b$.  Using \eqref{eq:kerb} gives
\[
 \frac1{n-1}u-\frac13C_\Omega u=0,
 \qquad
 u\in\rho(\kalg)^\perp,
\]
which is \eqref{eq:omega-condition}. 

If $\Omega\neq0$ we are done. If $\Omega=0$, then \eqref{eq:omega-condition} implies
$\rho(\kalg)^\perp=0$, hence
\[
 \rho(\kalg)=\Lambda^2\m=\mathfrak{so}(\m).
\]
The isotropy is therefore the full orthogonal algebra, and the irreducible symmetric
space has constant sectional curvature.  In the compact simply connected case this is
the round sphere, contrary to the assumption.  Thus $\Omega\ne0$.
\end{proof}

\section{The algebraic reduction statement}

We now give the representation-theoretic reduction mentioned above. 
Namely we will show that the existence of a $K$-invariant
$4$-form $\Omega\in \Lambda^4\m^*$ satisfying the condition \eqref{eq:omega-condition}
implies that the symmetric space is either K\"ahler or quaternionic K\"ahler.

The first step is
particularly useful because it avoids most of Cartan's list.

\begin{elem}\label{lem:isotropy-not-simple}
Under the hypotheses of Proposition~\ref{prop:omega-condition}, the isotropy algebra
$\kalg$ is not simple.
\end{elem}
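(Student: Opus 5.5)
The plan is to argue by contradiction. Assuming $\kalg$ is simple, we will show that the endomorphism $C_\Omega$ of Proposition~\ref{prop:omega-condition} is forced to be a linear combination of the identity and the curvature operator. Both of these are algebraic curvature tensors, whereas $C_\Omega$ comes from a $4$-form, and this will give $\Omega=0$, contradicting Proposition~\ref{prop:omega-condition}.

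\emph{Step 1: $C_\Omega$ is diagonal.} $C_\Omega$ is $K$-equivariant, since $\Omega$ is $K$-invariant. It is symmetric, and by \eqref{eq:omega-condition} it preserves $\rho(\kalg)^\perp$. Hence it also preserves $\rho(\kalg)$. Since $\rho$ is injective, $\rho(\kalg)\cong\kalg$ as $K$-modules. If $\kalg$ is simple, this is the adjoint representation of a compact simple Lie algebra, which is absolutely irreducible. By Schur's lemma $C_\Omega|_{\rho(\kalg)}=\lambda\,\mathrm{id}$ for some $\lambda\in\R$. Writing $\pi$ for the orthogonal projection onto $\rho(\kalg)$, we get
\[
C_\Omega=\tfrac{3}{n-1}\,\mathrm{id}+\bigl(\lambda-\tfrac{3}{n-1}\bigr)\pi .
\]

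\emph{Step 2: $\pi$ is proportional to the curvature operator.} For a symmetric space $R(X,Y)=-\ad([X,Y])|_\m$. Under the identification $\mathfrak{so}(\m)\cong\Lambda^2\m$, the curvature operator is therefore (up to sign) $\rho\circ b=\rho\rho^*$, using $b^*=\rho$. The map $\rho^*\rho:\kalg\to\kalg$ is symmetric, positive and $K$-equivariant. Again by Schur, and by simplicity of $\kalg$, it equals $c\,\mathrm{id}$ with $c>0$. Then $\rho\rho^*$ acts as $c$ on $\rho(\kalg)$ and as $0$ on $\rho(\kalg)^\perp=\ker\rho^*$, so $\rho\rho^*=c\pi$. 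Consequently $C_\Omega=a\,\mathrm{id}+a' R$ for some constants $a,a'$.

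\emph{Step 3: a $4$-form that is a curvature tensor vanishes.} Regard elements of $\Sym^2(\Lambda^2\m)$ as $4$-tensors. There is a $K$-invariant orthogonal splitting $\Sym^2(\Lambda^2\m)=\Lambda^4\m\oplus\ker(\mathrm{Bianchi})$. Here the Bianchi map (cyclic sum in three arguments) is, up to a factor, the orthogonal projection onto $\Lambda^4\m$. By \eqref{eq:COmega}, $C_\Omega$ is the $4$-form $\Omega$ itself under this identification, up to a normalisation. On the other hand, $\mathrm{id}$ (the curvature tensor of the round sphere) and $R$ both satisfy the first Bianchi identity. So $C_\Omega$ lies in $\Lambda^4\m\cap\ker(\mathrm{Bianchi})=0$. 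This gives $\Omega=0$, contradicting Proposition~\ref{prop:omega-condition}. Hence $\kalg$ is not simple.

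The main point requiring care is Step 2. One must make the sign and normalisation conventions precise, relating $R$, $\rho\circ b$ and the identification $\mathfrak{so}(\m)\cong\Lambda^2\m$. One must also check that Schur's lemma applies over $\R$, which holds because compact simple algebras are of real type. The other steps are formal.
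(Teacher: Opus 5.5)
Your argument is correct, but it takes a different route from the paper. The paper proves the lemma by citing \cite[Prop.~2.10]{MoroianuSemmelmannFourForms}. That result says that for a symmetric pair with simple isotropy algebra there are no non-zero invariant $4$-forms at all, so the non-zero $\Omega$ of Proposition~\ref{prop:omega-condition} cannot exist; the spectral condition \eqref{eq:omega-condition} plays no role there.

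You use \eqref{eq:omega-condition} essentially. It forces $C_\Omega$ to preserve $\rho(\kalg)$. Simplicity of $\kalg$, used twice (for $C_\Omega|_{\rho(\kalg)}$ and for $\rho^*\rho$), then makes $C_\Omega$ a combination of $\mathrm{id}$ and $\rho\rho^*$. Both of these satisfy the first Bianchi identity: for $\rho\rho^*$ this follows from the Jacobi identity, since $\langle\rho\rho^*(X\wedge Y),Z\wedge W\rangle$ is a multiple of $\langle[X,Y],[Z,W]\rangle=\langle[[X,Y],Z],W\rangle$. A totally skew tensor satisfying Bianchi vanishes, because the cyclic sum of a $4$-form over three arguments is $3$ times the form.

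What you prove is weaker than the full vanishing of $(\Lambda^4\m^*)^K$: you only exclude invariant $4$-forms satisfying the scalar condition. That is all the lemma needs. Your proof is self-contained and elementary, while the paper's is shorter and rests on a stronger external result. Your argument also applies verbatim in Proposition~\ref{prop:group-type}, where the paper invokes the same citation, since \eqref{eq:omega-condition} holds there and $\kalg=\operatorname{diag}\mathfrak h$ is simple.

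One minor simplification: $C_\Omega$ and $\rho^*\rho$ are symmetric, so Schur's lemma over $\R$ needs only irreducibility, because each eigenspace is invariant. Your remark about compact simple algebras being of real type is therefore unnecessary.
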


\begin{proof}
The isotropy representation is an $s$-representation.  The vanishing result of
\cite{MoroianuSemmelmannFourForms} applies here: Proposition~2.10 there shows that,
if the isotropy algebra $\kalg$ is simple, then
\[
 (\Lambda^4\m^*)^K=0.
\]
On the other hand Proposition~\ref{prop:omega-condition} produces a non-zero element
$\Omega\in(\Lambda^4\m^*)^K$.  Hence $\kalg$ cannot be simple.
\end{proof}

By Cartan's classification (see, for example, \cite{Helgason}), irreducible compact symmetric pairs with non-simple
isotropy fall into four classes: Hermitian symmetric spaces, quaternionic-K\"ahler
symmetric spaces (Wolf spaces), oriented real Grassmannians
\[
 \operatorname{Gr}^{+}_k(\mathbb R^{k+l}):=\SO(k+l)/(\SO(k)\times \SO(l)),
\]
and quaternionic Grassmannians
\[
 \operatorname{Gr}_k(\mathbb H^{k+l}):=\Sp(k+l)/(\Sp(k)\times \Sp(l)).
\]
The cases $k=2$ in the family of real Grassmannians are Hermitian symmetric, while the cases $k=4$ in the
family of real Grassmannians and $k=1$ in the family of quaternionic Grassmannians are quaternionic-K\"ahler.  We will refer to the remaining two families of Grassmannians as genuine real and genuine quaternionic Grasmannians. 

Next we will show that for genuine Grasmannians, the condition \eqref{eq:omega-condition} of Proposition \ref{prop:omega-condition} cannot be satisfied. The Hermitian symmetric and quaternionic-K\"ahler
symmetric spaces will be dealt with in Section 6.

We first record the dimension of the space of invariant $4$-forms in the two genuine Grassmannian families.

\begin{elem}\label{lem:grassmann-fourforms}
Let $M=G/K$ be either
\[
 \operatorname{Gr}^{+}_k(\mathbb R^{k+l}),\qquad k,l\ge3,
\]
with $k,l\ne4$, or
\[
 \operatorname{Gr}_k(\mathbb H^{k+l}),\qquad k,l\ge2.
\]
Then
\[
 \dim (\Lambda^4\m^*)^K=1.
\]
\end{elem}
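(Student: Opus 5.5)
The plan is to compute $(\Lambda^4\m^*)^K$ through the Cauchy-type decomposition of exterior powers of a tensor product and then invariant theory of the classical groups. For the real Grassmannian, $\m\cong V\otimes W$ with $V=\R^k$, $W=\R^l$, and $K=\SO(k)\times\SO(l)$ acting factorwise. For the quaternionic Grassmannian, after complexification $\m\otimes\mathbb C\cong E\otimes F$ with $E=\mathbb C^{2k}$, $F=\mathbb C^{2l}$, and $K=\Sp(k)\times\Sp(l)$ acting factorwise. Since $\m$ is a real form, $\dim_\R(\Lambda^4\m^*)^K=\dim_{\mathbb C}(\Lambda^4(E\otimes F))^K$, so it suffices to work over $\mathbb C$. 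In both cases I would use
\[
 \Lambda^4(V\otimes W)\cong\bigoplus_{|\lambda|=4} S_\lambda V\otimes S_{\lambda'}W ,
\]
where $\lambda$ runs over partitions of $4$, $\lambda'$ is the conjugate partition, and $S_\lambda$ is the Schur functor. This gives
\[
 \dim(\Lambda^4\m^*)^K=\sum_{|\lambda|=4}\dim (S_\lambda V)^{K_1}\cdot\dim(S_{\lambda'}W)^{K_2}.
\]

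\emph{Real case.} The classical fact is that $(S_\lambda\R^k)^{\mathrm{O}(k)}$ is one-dimensional if all parts of $\lambda$ are even (and $\ell(\lambda)\le k$), and zero otherwise. For $\SO(k)$ there can be additional, determinant-twisted invariants. These arise from $\mathrm{O}(k)$-components of type $\det$, which require $\lambda$ to contain a full column of length $k$ with the remainder even. For $|\lambda|=4$ this is possible only for $k\le 4$. I would check the small cases directly: for $k=4$ the only such $\lambda$ is $(1,1,1,1)$, giving the volume form of $\R^4$, which is exactly the excluded case. For $k=3$ the candidate $(2,1,1)$ leaves remainder $(1)$, which is not even, so there is no extra invariant. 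Hence for $k,l\ge3$ with $k,l\ne4$ the invariants are the $\mathrm{O}$-invariants only. The partitions of $4$ with all parts even are $(4)$ and $(2,2)$, with conjugates $(1,1,1,1)$ and $(2,2)$. So the only term where both $\lambda$ and $\lambda'$ are even is $\lambda=\lambda'=(2,2)$, contributing $1\cdot1$ because $k,l\ge2$. This gives $\dim=1$.

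\emph{Quaternionic case.} Here I use the symplectic analogue: $(S_\lambda\mathbb C^{2k})^{\Sp(k)}$ is one-dimensional if every column of $\lambda$ has even length, i.e.\ $\lambda'$ is even (and $\ell(\lambda)\le 2k$), and zero otherwise. There is no determinant subtlety, since $\Sp(k)$ is connected and has no nontrivial characters. The condition then requires $\lambda'$ even for the $E$-factor and $(\lambda')'=\lambda$ even for the $F$-factor, which again singles out $\lambda=(2,2)$. That term contributes $1$ for $k,l\ge1$, so in particular for $k,l\ge2$.

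The main obstacle is the careful justification of the $\SO(k)$ versus $\mathrm{O}(k)$ step in low rank, i.e.\ making sure that for $k=3$ (and $l=3$) no twisted invariant appears in $S_{(2,1,1)}$ or $S_{(1,1,1,1)}$. I would handle this by an explicit Littlewood restriction rule for $\mathrm{O}(3)$, or simply by noting that $S_{(2,1,1)}\R^3\cong \R^3\otimes\Lambda^3\R^3$ has no $\SO(3)$-invariants, while $\Lambda^4\R^3=0$. As a sanity check, the unique invariant should be the form obtained by skew-symmetrizing the curvature tensor, $\Omega=\sum R_{ijkl}\,e^{ij}\wedge e^{kl}$ up to sign, which is non-zero. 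This confirms the lower bound $\dim\ge1$ independently.
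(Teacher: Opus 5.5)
Your proof is correct, but it takes a different route from the paper. The paper does not compute invariants directly. It uses that on a compact symmetric space the $G$-invariant forms are exactly the harmonic ones, so $\dim(\Lambda^4\m^*)^K=b_4(M)$. It then reads off $b_4$ from Borel's Poincar\'e-polynomial formula in the equal-rank cases. For the real Grassmannians with $k,l$ both odd, where $\operatorname{rk}K<\operatorname{rk}G$ and Borel's formula is unavailable, it uses a separate rational-homotopy/Hurewicz argument.

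You instead stay on the representation side. The Cauchy decomposition of $\Lambda^4(V\otimes W)$, combined with the Cartan--Helgason description of spherical representations for $(\U(n),\mathrm{O}(n))$ and $(\U(2n),\Sp(n))$, reduces everything to partitions of $4$. Your handling of $\SO$ versus $\mathrm{O}$ is right: a vector in $S_\lambda\R^k$ on which $\mathrm{O}(k)$ acts by $\det$ requires $\lambda$ to have exactly $k$ parts, all odd. For $|\lambda|=4$ this happens only for $k=4$, $\lambda=(1,1,1,1)$.

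Your route is uniform: there is no equal/unequal-rank dichotomy and no topology. It makes the exceptional role of $k=4$ transparent, since the volume form of $\R^4$ pairs with the invariant of $\Sym^4W$. It also locates the generator in $S_{(2,2)}V\otimes S_{(2,2)}W$, which fits well with the eigenvalue computation in Lemma~\ref{lem:grassmann}. The paper's route, in exchange, produces the full table of fourth Betti numbers of real, complex and quaternionic Grassmannians from standard topological tools.

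One correction to your closing ``sanity check''. The expression $\sum R_{ijkl}\,e^{ij}\wedge e^{kl}$ is the full skew-symmetrization of the Riemannian curvature tensor, and it vanishes identically by the first Bianchi identity. So it is not a non-zero invariant. A correct explicit generator uses only one factor of the isotropy algebra. An example is the paper's $\Omega_0$, the Bianchi symmetrization of $\tr\big((uv^*-vu^*)(wz^*-zw^*)\big)$, which involves only the $\mathfrak{so}(W)$-component of the bracket. The $(2,2)$ summand in your main argument already gives $\dim\ge1$, so this remark is not needed and should be dropped or replaced.
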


\begin{proof}
Since $M$ is a compact symmetric space, a differential form is $G$-invariant if and only if it is harmonic.  Thus
\[
 \dim (\Lambda^4\m^*)^K=b_4(M).
\]
The required values of $b_4$ are given by Theorem~\ref{thm:grassmann-b4} in
Appendix~\ref{app:grassmann-b4}.  Under the assumptions of the lemma one has
$b_4(M)=1$ in both the genuine real and the genuine quaternionic Grassmannian cases.
\end{proof}

\begin{ere}\label{rem:grassmann-fourforms}
The restrictions in the preceding lemma are also transparent from
Theorem~\ref{thm:grassmann-b4}.  A complex Grassmannian with $k,l\ge2$ has $b_4=2$,
whereas in the oriented real family a rank-$4$ factor contributes one additional
degree-$4$ class.  Thus the cases $k=4$ or $l=4$ are precisely the exceptional real
Grassmannians relevant here. They belong to the quaternionic-K\"ahler part of Cartan's
list and are treated separately below.
\end{ere}

\begin{ere}\label{rem:why-dim-one}
The point of Lemma~\ref{lem:grassmann-fourforms} is that, in the two genuine
Grassmannian families, it is enough to compute $C_\Omega$ for one non-zero invariant
$4$-form.  Any other invariant $4$-form is a scalar multiple of it, so distinct
eigenvalues on two irreducible summands of $\rho(\kalg)^\perp$ rule out the scalar
condition in Proposition~\ref{prop:omega-condition} for every non-zero invariant
$4$-form.
\end{ere}

We record the elementary representation-theoretic calculation which is needed here.

\begin{elem}\label{lem:grassmann}
Let $M=G/K$ be either
\[
\operatorname{Gr}^{+}_k(\mathbb R^{k+l}),\qquad k,l\ge3,
\]
with $k,l\ne4$, or
\[
 \operatorname{Gr}_k(\mathbb H^{k+l}),\qquad k,l\ge2.
\]
Then no non-zero invariant $4$-form $\Omega\in(\Lambda^4\m^*)^K$ can satisfy
\[
 C_\Omega=c\,\operatorname{id}
 \quad\hbox{on }\rho(\kalg)^\perp
\]
with $c\ne0$.
\end{elem}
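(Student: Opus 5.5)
We will show that, in both families, $\rho(\kalg)^\perp$ splits into two irreducible $K$-submodules which are exchanged by the formal symmetry $k\leftrightarrow l$, and that any non-zero invariant $4$-form $\Omega$ acts on them by scalars of opposite sign. By Lemma~\ref{lem:grassmann-fourforms} and Remark~\ref{rem:why-dim-one}, it is enough to do this for one explicit generator $\Omega$ of $(\Lambda^4\m^*)^K$. Once the opposite signs are established, $C_\Omega=c\operatorname{id}$ on $\rho(\kalg)^\perp$ forces $c=-c$, that is $c=0$.

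\textbf{Real case.} Write $\m=E\otimes F$ with $E=\R^k$, $F=\R^l$, so that elements of $\m$ are $k\times l$ matrices. Then
\[
\Lambda^2\m=(\Lambda^2E\otimes\Sym^2F)\oplus(\Sym^2E\otimes\Lambda^2F),
\]
and $\rho(\kalg)=\Lambda^2E\otimes g_F\oplus g_E\otimes\Lambda^2F$. Hence
\[
\rho(\kalg)^\perp=(\Lambda^2E\otimes\Sym^2_0F)\oplus(\Sym^2_0E\otimes\Lambda^2F),
\]
and both summands are irreducible for $k,l\ge3$, $k,l\ne4$.

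For the generator I take the first Pontryagin form of the tautological bundle, in the explicit form
\[
\Omega(X_1,X_2,X_3,X_4)=\sum_{\sigma\in S_4}\operatorname{sgn}(\sigma)\tr\bigl(X_{\sigma1}X_{\sigma2}^TX_{\sigma3}X_{\sigma4}^T\bigr).
\]
This is visibly $\SO(k)\times\SO(l)$-invariant. It is non-zero, which is checked by evaluating it on four matrix units. Under transposition $X\mapsto X^T$ it is carried to the analogous form for $\m^T=F\otimes E$.

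Since $C_\Omega$ commutes with $K$, Schur's lemma shows that it acts on each summand by a scalar. I will compute the two scalars $\lambda_1,\lambda_2$ on decomposable test vectors. For the first summand I use $u=(e_1\otimes f_1)\wedge(e_2\otimes f_1)-(e_1\otimes f_2)\wedge(e_2\otimes f_2)$, which lies in $\Lambda^2E\otimes\Sym^2_0F$. For the second I use its transpose analogue. The computation reduces to evaluating $\Omega$ on quadruples of matrix units, and only the pairing $\langle C_\Omega u,u\rangle$ is needed.

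I expect $\lambda_2=-\lambda_1\ne0$. The mechanism is that moving a transpose past the trace in $\Omega$ swaps the roles of the symmetric and skew factors, which flips the sign. This bookkeeping is the main place where an error could enter, so I would cross-check it on the smallest admissible case $k=l=3$.

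\textbf{Quaternionic case.} I complexify and write $\m^{\mathbb C}=E\otimes F$ with $E=\mathbb C^{2k}$, $F=\mathbb C^{2l}$ symplectic, and $\kalg^{\mathbb C}=\Sym^2E\oplus\Sym^2F$. Then
\[
\Lambda^2\m^{\mathbb C}=(\Lambda^2E\otimes\Sym^2F)\oplus(\Sym^2E\otimes\Lambda^2F).
\]
Splitting off the symplectic traces $\Lambda^2E=\Lambda^2_0E\oplus\mathbb C\omega_E$ gives
\[
\rho(\kalg)^\perp=(\Lambda^2_0E\otimes\Sym^2F)\oplus(\Sym^2E\otimes\Lambda^2_0F),
\]
with both summands irreducible and non-zero for $k,l\ge2$.

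For the generator I take the analogue of the form above, built from the two symplectic forms. Equivalently, one can take the difference of the two natural invariant $4$-forms $\omega_E^2\otimes\Sym$-type and $\Sym\otimes\omega_F^2$-type, which by Lemma~\ref{lem:grassmann-fourforms} are proportional. The same test-vector computation should again give opposite eigenvalues, by the $E\leftrightarrow F$ symmetry of the construction combined with the sign change between the $\Lambda^2$ and $\Sym^2$ factors.

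\textbf{Main obstacle.} The hardest step is the explicit scalar computation, and in particular confirming that $\lambda_1\neq\lambda_2$. Strictly speaking, distinctness of the two eigenvalues is all that is required. If a sign error makes them coincide in the direct computation, I would fall back on a trace argument. The first step is to compute $\tr C_\Omega$ on all of $\Lambda^2\m$, which vanishes because it is a full contraction of a $4$-form. The second step is to compute $\tr C_\Omega$ on $\rho(\kalg)$. Given these two traces, the relation $\lambda_1\dim V_1+\lambda_2\dim V_2=\tr C_\Omega|_{\rho(\kalg)^\perp}$ together with one explicit eigenvalue pins down $\lambda_2$ and allows the comparison with $\lambda_1$.
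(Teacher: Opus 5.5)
Your plan is essentially the paper's proof: the same splitting $\rho(\kalg)^\perp=E_1\oplus E_2$, the same Pontryagin-type generator (your antisymmetrized trace is a non-zero multiple of the paper's $\Omega_0$), the same test vectors $u_1=z_{11}\wedge z_{21}-z_{12}\wedge z_{22}$ and $u_2=z_{11}\wedge z_{12}-z_{21}\wedge z_{22}$ with $z_{ia}=e_i\otimes f_a$, and the same appeal to $\dim(\Lambda^4\m^*)^K=1$ plus Schur's lemma. The computation you only anticipate does go through: your form equals $4$ on $(z_{11},z_{12},z_{21},z_{22})$, so the Rayleigh quotients on $u_1,u_2$ are $-4$ and $+4$ (the paper's $-2,+2$ after rescaling); moreover these real matrix units are also orthogonal to $\rho(\mathfrak{sp}(k)\oplus\mathfrak{sp}(l))$ in $\m=\mathbb{H}^{k\times l}$, so the quaternionic analogue $\sum_\sigma\operatorname{sgn}(\sigma)\,\mathrm{Re}\,\tr(X_{\sigma1}X_{\sigma2}^*X_{\sigma3}X_{\sigma4}^*)$ yields the same two numbers, and you need neither the trace fallback nor the ``difference of two proportional forms'' (which could vanish).
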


\begin{proof}
By Lemma~\ref{lem:grassmann-fourforms}, in either family every invariant $4$-form is a
scalar multiple of a fixed non-zero generator.  We treat the real case first.

Put $V=\R^k$, $W=\R^l$ and $\m=V\otimes W$.  Then
\[
 \kalg=\mathfrak{so}(V)\oplus\mathfrak{so}(W)
\]
and
\[
 \Lambda^2(V\otimes W)
 =
 (\Lambda^2V\otimes \Sym^2W)
 \oplus
 (\Sym^2V\otimes\Lambda^2W).
\]
Using
\[
 \Sym^2V=\R g_V\oplus \Sym^2_0V \, ,
 \qquad
 \Sym^2W=\R g_W\oplus \Sym^2_0W \, ,
\]
we obtain
\[
 \rho(\kalg)^\perp=E_1\oplus E_2 \, ,
\]
where
\[
 E_1=\Lambda^2V\otimes \Sym^2_0W \, ,
 \qquad
 E_2=\Sym^2_0V\otimes\Lambda^2W \, .
\]
Both summands are irreducible and non-isomorphic.  Let $\Omega_0\in\Lambda^4(V\otimes W)$ be the invariant
$4$-form constructed in \cite[Lemma~2.7]{MoroianuSemmelmannFourForms}. It is
obtained from
\[
 R(u,v,w,z):=\tr\big((uv^*-vu^*)(wz^*-zw^*)\big)
\]
by the Bianchi map:
\[
\Omega_0(u,v,w,z):= R(u,v,w,z)+R(v,w,u,z)+R(w,u,v,z).
\]
Here $u,v,w,z\in V\otimes W$ are viewed via the metric as elements in the space of linear maps $\mathcal{L}(V,W)$, and their adjoints as elements in $\mathcal{L}(W,V)$.

With respect to orthonormal bases $\{e_i\}$ of $V$ and $\{f_a\}$ of $W$, and defining
$z_{ia}=e_i\otimes f_a$, an elementary calculation shows that $\Omega_0$ satisfies
\[
 \Omega_0(z_{11},z_{12},z_{21},z_{22})=2.
\]
Schur's Lemma implies that $C_{\Omega_0}$ is scalar on $E_1$ and $E_2$.  We now
compute the two scalars explicitly.  On the $4$-dimensional subspace spanned by
$z_{11},z_{12},z_{21},z_{22}$ we have
\[
 \Omega_0
 =2\,z_{11}^*\wedge z_{12}^*\wedge z_{21}^*\wedge z_{22}^*.
\]
Since
\[
 C_{\Omega_0}(X\wedge Y)=X\lrc Y\lrc\Omega_0,
\]
direct contraction gives
\[
 \begin{aligned}
 C_{\Omega_0}(z_{11}\wedge z_{21})&=2\,z_{12}\wedge z_{22},\\
 C_{\Omega_0}(z_{12}\wedge z_{22})&=2\,z_{11}\wedge z_{21},
 \end{aligned}
\]
and
\[
 \begin{aligned}
 C_{\Omega_0}(z_{11}\wedge z_{12})&=-2\,z_{21}\wedge z_{22},\\
 C_{\Omega_0}(z_{21}\wedge z_{22})&=-2\,z_{11}\wedge z_{12}.
 \end{aligned}
\]
Now
\[
 u_1:=z_{11}\wedge z_{21}-z_{12}\wedge z_{22}
\]
corresponds, via the embedding of $E_1=\Lambda^2V\otimes S^2_0W$ into $\Lambda^2(V\otimes W)\simeq \mathrm{End}^-(V\otimes W)$, 
\[
A\otimes S\mapsto \big\{e\otimes f\mapsto A(e)\otimes S(f)\big\}
\]
to
\[
 (e_1\wedge e_2)\otimes(f_1\odot f_1-f_2\odot f_2),
\]
and hence $u_1\in E_1$. Since
\[
 C_{\Omega_0}u_1
 =2\,z_{12}\wedge z_{22}-2\,z_{11}\wedge z_{21}
 =-2u_1,
\]
we obtain by the Schur Lemma
\[
 C_{\Omega_0}|_{E_1}=-2\,\operatorname{id}.
\]
Similarly,
\[
 u_2:=z_{11}\wedge z_{12}-z_{21}\wedge z_{22}
\]
corresponds to
\[
 (e_1\odot e_1-e_2\odot e_2)\otimes(f_1\wedge f_2),
\]
so $u_2\in E_2$. Computing
\[
 C_{\Omega_0}u_2
 =-2\,z_{21}\wedge z_{22}+2\,z_{11}\wedge z_{12}=2u_2,
\]
yields
\[
 C_{\Omega_0}|_{E_2}=2\,\operatorname{id}.
\]
Thus the two eigenvalues of $C_{\Omega_0}$ on $E_1$ and $E_2$ are $-2$ and $2$.
Since every non-zero invariant $4$-form is of the form $\Omega=\lambda\Omega_0$ with
$\lambda\ne0$, the corresponding eigenvalues are $-2\lambda$ and $2\lambda$.  Hence
$C_\Omega$ cannot act by one non-zero scalar on all of $\rho(\kalg)^\perp$.

For the quaternionic Grassmannian we complexify the isotropy representation.  If $E$ and
$F$ denote the standard complex representations of $\Sp(k)$ and $\Sp(l)$, then
\[
 \m^{\mathbb C}=E\otimes F
\]
and
\[
 \Lambda^2(E\otimes F)
 =
 (\Lambda^2E\otimes \Sym^2F)
 \oplus
 (\Sym^2E\otimes\Lambda^2F).
\]
Writing
\[
 \Lambda^2E=\mathbb C\omega_E\oplus\Lambda^2_0E,
 \qquad
 \Lambda^2F=\mathbb C\omega_F\oplus\Lambda^2_0F,
\]
shows that $\rho(\kalg)^\perp$ again contains the two non-isomorphic irreducible
summands
\[
 E'_1=\Lambda^2_0E\otimes \Sym^2F,
 \qquad
 E'_2=\Sym^2E\otimes\Lambda^2_0F.
\]
The space of invariant $4$-forms is one-dimensional in the genuine quaternionic
Grassmannian case.  Evaluating its generator on a quaternionic $2\times2$ block gives
opposite non-zero eigenvalues on $E'_1$ and $E'_2$. Note that $E'_1$ and $E'_2$ are non-zero thanks to the assumption $k,l\ge 2$. Hence the restriction of
$C_\Omega$ to $\rho(\kalg)^\perp$ is not scalar.  This proves the claim.
\end{proof}

We can now formulate the key reduction statement.

\begin{elem}\label{lem:key}
Let $(\g,\kalg)$ be an irreducible compact symmetric pair with simple transvection
algebra $\g$, of non-constant sectional curvature.  If there exists an element 
$\Omega\in(\Lambda^4\m^*)^K$ such that  the restriction of
$C_\Omega$ to $\rho(\kalg)^\perp$ is a non-zero scalar, then the symmetric space $G/K$ is Hermitian symmetric or quaternionic-K\"ahler.
\end{elem}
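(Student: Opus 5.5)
The plan is to combine the three preceding lemmas with Cartan's classification, arguing by elimination. Suppose $\Omega\in(\Lambda^4\m^*)^K$ is such that $C_\Omega$ restricted to $\rho(\kalg)^\perp$ equals $c\,\operatorname{id}$ with $c\ne0$. First I would check that $\Omega\neq0$. This is automatic: $\rho(\kalg)^\perp\neq0$ because the curvature is non-constant, so the isotropy cannot be all of $\mathfrak{so}(\m)$. Hence $\Omega=0$ would force $c=0$, which is excluded.

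Next I would show that $\kalg$ is not simple. The argument of Lemma~\ref{lem:isotropy-not-simple} only uses the existence of a non-zero element of $(\Lambda^4\m^*)^K$ together with Proposition~2.10 of \cite{MoroianuSemmelmannFourForms}, which states that $(\Lambda^4\m^*)^K=0$ whenever $\kalg$ is simple. So the same argument applies verbatim to the present $\Omega$. Cartan's classification, as recalled above, then places $G/K$ in one of four classes: Hermitian symmetric, quaternionic-K\"ahler, oriented real Grassmannians $\operatorname{Gr}^+_k(\R^{k+l})$, or quaternionic Grassmannians $\operatorname{Gr}_k(\mathbb H^{k+l})$.

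It remains to treat the two Grassmannian families and show that only their Hermitian or quaternionic-K\"ahler members survive. For the real Grassmannians, the case $k=1$ (or $l=1$) is a sphere, which has constant curvature and is excluded. The case $k=2$ is Hermitian and $k=4$ is quaternionic-K\"ahler, and the same holds for $l$ by the symmetry $k\leftrightarrow l$. The remaining cases $k,l\ge3$ with $k,l\ne4$ are ruled out by Lemma~\ref{lem:grassmann}. For the quaternionic Grassmannians, $k=1$ or $l=1$ gives $\HP^{l}$ (respectively $\HP^k$), which is quaternionic-K\"ahler. The cases $k,l\ge2$ are again excluded by Lemma~\ref{lem:grassmann}. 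Only Hermitian symmetric or quaternionic-K\"ahler spaces remain, which proves the statement.

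The main obstacle is bookkeeping rather than analysis. One must make sure that Cartan's list of non-simple isotropy is exhausted by these four classes, including low-dimensional coincidences such as $\operatorname{Gr}^+_3(\R^6)\cong\SU(4)/\SO(4)$. One must also confirm that the edge cases, in particular $k$ or $l$ equal to $1$, $2$ or $4$ and the small-rank exceptional spaces, are covered either by the Hermitian/quaternionic-K\"ahler classes or by the hypotheses of Lemma~\ref{lem:grassmann}. I would carry out this check by running through Helgason's table for compact irreducible symmetric spaces with $\g$ simple and non-simple $\kalg$.
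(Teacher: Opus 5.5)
Your proposal is correct and follows the paper's proof. Lemma~\ref{lem:isotropy-not-simple} rules out simple $\kalg$, Cartan's list leaves the four classes, and Lemma~\ref{lem:grassmann} removes the genuine Grassmannians. Your explicit check that $\Omega\neq0$ (via $\rho(\kalg)^\perp\neq0$ for non-constant curvature together with $c\neq0$) and your case bookkeeping for $k,l\in\{1,2,4\}$ spell out two points that the paper's three-line proof leaves implicit.
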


\begin{proof}
By Lemma~\ref{lem:isotropy-not-simple}, $\kalg$ is not simple.  Cartan's list therefore
reduces the possibilities to the four classes mentioned after Lemma \ref{lem:isotropy-not-simple}.  The two genuine
Grassmannian families are excluded by Lemma~\ref{lem:grassmann}. The remaining possibilities are precisely the Hermitian symmetric
spaces and the quaternionic-K\"ahler symmetric spaces.
\end{proof}

\section{The K\"ahler and quaternionic-K\"ahler cases}

We now explain why the two expected non-spherical exceptions are precisely
$\CP^m$ and $\HP^q$.

\subsection{Hermitian symmetric spaces}\label{subsec:kahler-rigidity}

Let $(M^{2m},g,J)$ be compact K\"ahler, with $m\ge2$.  The conformal Killing forms on
compact K\"ahler manifolds were described in \cite{MoroianuSemmelmannKahler}.  In
degree $2$, modulo parallel forms, the conformal Killing equation is equivalent to the
equation for a Hamiltonian $2$-form.  More precisely, after adding a suitable multiple
of  the K\"ahler form, a conformal Killing $2$-form gives a Hamiltonian $2$-form (see \cite[Prop.~3.13]{MoroianuSemmelmannKahler} and
\cite{ACG}.  In complex dimension $2$ the additional condition in the Hamiltonian
$2$-form description is that the codifferential be dual to a Killing vector
field, which is automatic here by Lemma~\ref{lem:divergence-killing}.

It is convenient to use the equivalent endomorphism notation.  A Hamiltonian $2$-form
$\varphi$ corresponds to a Hermitian endomorphism $F$ by
\[
 \varphi(X,Y)=g(JFX,Y).
\]
Then $F$ satisfies the c-projective mobility equation
\begin{equation}\label{eq:mobility}
 \nabla_XF
 =X^\flat\otimes\xi_F+\xi_F^\flat\otimes X
 +(JX)^\flat\otimes J\xi_F+(J\xi_F)^\flat\otimes JX,
\end{equation}
where $J\xi_F$ is a Killing vector field (see 
\cite[Rem.~1]{CalderbankMatveevRosemann}). 

We now prove the rigidity statement for Hermitian symmetric spaces.

\begin{epr}\label{prop:kahler-rigidity}
Let $(M^{2m},g,J)$, $m\ge2$, be a compact simply connected irreducible Hermitian
symmetric space.  If $M$ carries a non-parallel conformal Killing $2$-form, then up to homothety,
\[
 M\simeq\CP^m.
\]
\end{epr}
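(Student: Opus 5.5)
The plan is to turn a non-parallel conformal Killing $2$-form into a non-trivial solution $F$ of the c-projective mobility equation \eqref{eq:mobility}, and then use the known rigidity for that equation on compact K\"ahler manifolds.

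Let $\psi\in\CK^2(M)$ be non-parallel. By Lemma~\ref{lem:divergence-killing} and \eqref{eq:killing-parallel}, $\psi$ is not Killing, so $\delta\psi\neq0$. Following \cite[Prop.~3.13]{MoroianuSemmelmannKahler}, we add a suitable multiple of the K\"ahler form $\omega$ to $\psi$ to obtain a Hamiltonian $2$-form $\varphi$. In complex dimension $2$, the extra condition needed for this is that the codifferential be dual to a Killing field, and this again holds by Lemma~\ref{lem:divergence-killing}. We then pass to the Hermitian endomorphism $F$ with $\varphi(X,Y)=g(JFX,Y)$. It solves \eqref{eq:mobility}, and its vector field $\xi_F$ is, up to a non-zero constant and $J$, the field $(\delta\psi)^\sharp$. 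Hence $\xi_F\neq0$, and $F$ is not a constant multiple of the identity plus a parallel endomorphism. Since $M$ is irreducible, its parallel Hermitian endomorphisms are multiples of $\mathrm{id}$.

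The core step is to show that a compact K\"ahler manifold with a non-trivial solution of \eqref{eq:mobility} is homothetic to $\CP^m$. We do this by prolongation. Differentiating \eqref{eq:mobility} and skew-symmetrizing gives a curvature identity, of the form
\[ R(X,Y)F-F R(X,Y)\ \text{expressed through}\ \nabla\xi_F, \]
and this forces $\nabla\xi_F=\mu\,\mathrm{id}+\text{(terms in }F)$ with a function $\mu$ satisfying $d\mu=c\,F\xi_F$-type relations. This is the standard c-projective prolongation: on an Einstein K\"ahler manifold it closes up into the system $\nabla_X\xi=\mu X+BF X$, $\nabla_X\mu=2B g(\xi,X)$ for a constant $B$. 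If $B=0$ on a compact manifold, then $\xi$ is parallel-like, and integrating $\Delta$ of $\mu$ (or of $|\xi|^2$) forces $\xi=0$, a contradiction. If $B\neq0$, the function $f=\mu$ (or $\mathrm{tr}F$) is a non-constant function whose Hessian is proportional to $fg$ on the $J$-invariant part. This is a Tanno/Obata-type equation for K\"ahler manifolds, and its compact solutions characterize $\CP^m$ with a Fubini--Study metric of constant holomorphic sectional curvature. This step matches the known result of \cite{CalderbankMatveevRosemann} and related work that non-trivial c-projective mobility on compact K\"ahler manifolds is only possible for $\CP^m$ with the Fubini--Study metric; I would cite this and sketch why it applies.

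In the symmetric setting there is an alternative route. Since $\nabla R=0$, the integrability condition $[R(X,Y),F]=$ (expression linear in $X,Y,\xi_F,J$), evaluated at a point where $\xi_F\ne0$, can be compared with the algebraic classification: it says the curvature tensor is that of constant holomorphic sectional curvature on enough directions to force it everywhere. Working at $o$ with $F(o)$ and $\xi_F(o)$ and using $K$-invariance, as in Section~4, one sees that $R$ must equal the $\CP^m$ curvature tensor up to scale.

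The main obstacle is the prolongation and integrability step. One must show that the curvature term forces constant holomorphic sectional curvature, or equivalently produce the Tanno-type equation with the correct constant. The delicate points are the bookkeeping of the curvature contraction and keeping track of the normalization that fixes the multiple of $\omega$.
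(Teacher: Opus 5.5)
Your first step matches the paper: you pass from $\psi$ to a Hamiltonian $2$-form and then to a solution $F$ of \eqref{eq:mobility} with $\xi_F\neq0$. The gap is in what you call the core step.

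\textbf{The core step is false as stated.} A compact K\"ahler manifold with a single non-trivial solution of \eqref{eq:mobility}, that is, with c-projective mobility $2$ (equivalently, a non-trivial Hamiltonian $2$-form), need not be $\CP^m$. The admissible metrics on $\CP^1$-bundles (see \cite{ACG} and its sequels), for example Calabi's extremal metrics on Hirzebruch surfaces, are compact counterexamples. The rigidity results you want to invoke all assume that the degree of mobility is at least $3$. These are \cite{CalderbankMatveevRosemann} and the theorem of Fedorova--Kiosak--Matveev--Rosemann that a compact K\"ahler manifold of mobility $\ge3$ is $\CP^m$ with the Fubini--Study metric.

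\textbf{The prolongation sketch hides the same gap.} Closing the system with a \emph{constant} $B$, i.e.\ $\nabla_X\xi=\mu X+BFX$, is exactly what mobility $\ge3$ provides. You give no argument that the Einstein condition alone yields such a $B$ for one solution.

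\textbf{The ``alternative route'' does not repair this.}
\begin{itemize}
\item A single $F$ is not $K$-invariant.
\item The integrability condition for \eqref{eq:mobility} expresses $[R(X,Y),F]$ through $\nabla\xi_F$, not through $\xi_F$.
\item Information about the curvature along one vector $\xi_F(o)$ cannot force constant holomorphic sectional curvature.
\end{itemize}

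\textbf{The missing idea is to use homogeneity through Proposition~\ref{prop:delta-iso}.} Modulo parallel forms, the non-parallel conformal Killing $2$-forms form a $G$-module isomorphic to $\g$. Hence the solutions of \eqref{eq:mobility}, together with $\mathrm{id}$, span a space of dimension at least $1+\dim\g\ge3$. The paper then argues as follows.
\begin{enumerate}
\item With mobility at least $3$, \cite[Thms.~1 and~2]{CalderbankMatveevRosemann} gives one constant $b$ such that $(R-bR_{\mathrm{cst}})(X,Y)\xi_F=0$ for every non-parallel solution $F$.
\item The Ricci contraction, using the K\"ahler--Einstein condition, forces $b=1$. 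So the Bochner tensor annihilates every $\xi_F$.
\item The fields $J\xi_F$ run through all Killing fields, so the vectors $\xi_F(x)$ fill $T_xM$ at every point. Hence the Bochner tensor vanishes.
\item A K\"ahler--Einstein metric with vanishing Bochner tensor has constant holomorphic sectional curvature, so $M$ is $\CP^m$ up to homothety.
\end{enumerate}
Once mobility $\ge3$ is established this way, your Tanno-type route (the Fedorova--Kiosak--Matveev--Rosemann argument) would also be a legitimate alternative ending. Without it, the proof does not go through.
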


\begin{proof}
A compact simply connected irreducible Hermitian symmetric space is K\"ahler--Einstein with positive scalar curvature. Its  curvature tensor decomposes as
\begin{equation}\label{dec}
    R=B+R_{\mathrm{cst}},
\end{equation}
where $B$ is the Bochner tensor and $R_{\mathrm{cst}}$ is an algebraic K\"ahler curvature tensor of constant positive holomorphic sectional curvature.  By Proposition~\ref{prop:delta-iso}, a non-parallel
conformal Killing $2$-form gives, modulo parallel forms, a $G$-module of such forms
isomorphic to the transvection algebra $\g$.  Under the correspondence above this
produces a $G$-module of solutions of \eqref{eq:mobility}. The degree of c-projective mobility (that is, the dimension of the solution space of \eqref{eq:mobility}) is thus at least $1+\dim(\g)\ge3$.

We may therefore apply the curvature-nullity theorem of Calderbank--Matveev--Rosemann
\cite[Thms.~1 and~2]{CalderbankMatveevRosemann} stating that there is
a constant $b$ such that for every non-parallel solution $F$ of
\eqref{eq:mobility}, the corresponding vector field $\xi_F$ lies in the
$b$-nullity of the curvature.  Equivalently, 
for a suitable constant $b$ one has
\begin{equation}\label{eq:kahler-nullity}
 \bigl(R-bR_{\mathrm{cst}}\bigr)(X,Y)\xi_F=0
 \qquad\text{for all }X,Y\in TM,
\end{equation}
so by the decomposition \eqref{dec},
\begin{equation}\label{eq:kahler-nullity2}
 \bigl(B-(b-1)R_{\mathrm{cst}}\bigr)(X,Y)\xi_F=0
 \qquad\text{for all }X,Y\in TM,
\end{equation}

Since the Ricci contraction of $B$ vanishes and the Ricci contraction of $R_{\mathrm{cst}}$ is $ c \, g$, with $c \neq 0$, the Ricci contraction in  \eqref{eq:kahler-nullity2} leads to the equation
\[
 \bigl(b-1)\xi_F=0,
\]
Thus $b=1$, and hence \eqref{eq:kahler-nullity2} becomes
\begin{equation}\label{eq:bochner-nullity}
 B(X,Y)\xi_F=0\qquad\text{for all }X,Y\in TM.
\end{equation}
On the other hand, by Proposition~\ref{prop:delta-iso}, the set of vector fields $J\xi_F$ satisfying \eqref{eq:mobility} is equal to the set $\g$ of Killing vector fields of $M$. Thus at every point $x\in M$, the set of $\xi_F(x)$ is equal to $T_xM$.  Equation
\eqref{eq:bochner-nullity} therefore implies
\[
B=0.
\]

Finally, a K\"ahler--Einstein metric with vanishing Bochner tensor has constant
holomorphic sectional curvature. Since $(M,g)$ is compact and simply connected it has to be biholomorphic to $\CP^m$ with the Fubini--Study metric, up to homothety.
\end{proof}

Conversely, as explained in Section \ref{sec:preliminaries}, every nonconstant Killing potential on $\CP^m$
produces a non-parallel conformal Killing $2$-form (see also \cite{MoroianuSemmelmannKahler}). 

\subsection{Quaternionic-K\"ahler symmetric spaces}

The quaternionic case is completely rigid by a theorem of David and Pontecorvo.

\begin{ath}[David--Pontecorvo \cite{DavidPontecorvo}]\label{dc}
Let $(M^{4q},g)$, $q\ge2$, be a compact quaternionic-K\"ahler manifold.  Then $M$
admits a conformal Killing $2$-form which is not Killing if and only if, up to homothety, $M$ is isomorphic
as a quaternionic-K\"ahler manifold, to
$\HP^q$.
\end{ath}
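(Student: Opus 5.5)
This theorem is cited from David and Pontecorvo rather than proved in the paper. A self-contained proof would run along the lines of the Hermitian argument in Proposition~\ref{prop:kahler-rigidity}, adapted to the quaternionic setting.

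\textbf{Setup.} A compact quaternionic-K\"ahler manifold of dimension $4q\ge8$ is Einstein. By Lemma~\ref{lem:divergence-killing}, $\delta\psi$ is dual to a Killing field $K$ for any $\psi\in\CK^2(M)$.

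\textbf{Step 1: reduce to the covariant derivative of $K$.} Decompose $\Lambda^2T^*M=\Sym^2H\oplus\Sym^2E\oplus(\Lambda^2_0H\otimes\Sym^2E)$, where $\Sym^2H$ is the rank-three bundle spanned locally by the K\"ahler forms $\omega_1,\omega_2,\omega_3$.
\begin{itemize}
\item Project equation \eqref{eq:CK2} onto these summands and differentiate once more.
\item Use that the curvature of a quaternionic-K\"ahler manifold acts as a scalar multiple of the identity on $\Sym^2H$.
\item Aim to show that the component of $\psi$ in the third summand vanishes.
\item Aim to show that the $\Sym^2H$- and $\Sym^2E$-components are fixed multiples of the corresponding projections of $dK^\flat$.
\end{itemize}
This is the explicit isomorphism recalled in Section~\ref{sec:preliminaries}.

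\textbf{Step 2: derive a curvature condition.} The conformal Killing equation, applied to this explicit ansatz, must hold identically. Expanding $\nabla(dK^\flat)=2R(\cdot,K)$ via the Killing identity, the residual terms involve only the component $W$ of the curvature on $\Sym^2E$, that is, the hyperk\"ahler Weyl-type part of $R$. I expect this to force
\[
W(X,Y)K=0\qquad\text{for all }X,Y,
\]
up to a multiple of the model curvature of $\HP^q$. A Ricci-type trace argument, as in the Hermitian case, should eliminate that multiple.

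\textbf{Step 3: conclude $W=0$.} This step is the main obstacle, because on a general quaternionic-K\"ahler manifold only one Killing field is available from a single $\psi$. The plan is as follows.
\begin{itemize}
\item First use unique continuation to see that $K$ is nonzero on a dense open set.
\item Then use the algebraic structure of $W$, a section of $\Sym^4E$, to show that $W(\cdot,\cdot)K=0$ for a nonzero vector $K$ forces $W=0$. Contracting with $J_aK$ and using the $\Sp(q)\Sp(1)$-invariance of the kernel should help here.
\end{itemize}
In the symmetric setting of this paper the step is easier. By Proposition~\ref{prop:delta-iso} the Killing fields $K$ fill each tangent space, so $W=0$ follows immediately, as in \eqref{eq:bochner-nullity}.

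\textbf{Conclusion and converse.} Vanishing $W$ means constant quaternionic sectional curvature. Simple connectivity then yields $\HP^q$ up to homothety. The converse is the explicit construction of conformal Killing $2$-forms on $\HP^q$ from Killing vector fields, recalled in Section~\ref{sec:preliminaries}.
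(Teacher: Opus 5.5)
The paper gives no proof of this statement. It is quoted from David--Pontecorvo, with only a heuristic remark about David's prolongation connection. So your outline has to stand on its own, and it breaks at Step~3.

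Steps~1--2 are only announced, but they are consistent with what is known. Step~2 in fact yields exactly $W(K,\cdot)=0$. The tensor $X\mapsto W(X,K)$ already lies in the Cartan summand of $T^*M\otimes\Lambda^2T^*M$: it is trace-free because $W$ is Ricci-flat, and its alternation vanishes by the first Bianchi identity. The $\HP^q$-model terms cancel for the appropriate ratio of the two projections.

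The problem is your pointwise claim that $W(\cdot,\cdot)K=0$ for one non-zero $K$ forces $W=0$. This is false for $q\ge2$. The kernel of $W_x$ is not $\Sp(q)\Sp(1)$-invariant, since $W_x$ is not. It is only stable under the $J_a$, so all you obtain is that it contains the quaternionic line $\mathbb H K$. Let $L\subset E_x$ be the corresponding quaternionic line and $E'$ its orthogonal complement. Every element of $\Sym^4E'$, viewed as an algebraic curvature tensor, annihilates $\mathbb H K$ in every slot, and $\Sym^4E'\neq0$ because $\dim_{\mathbb C}E'=2q-2\ge2$. Concretely, take the curvature tensor at a point of the product of a non-flat hyperk\"ahler $(4q-4)$-manifold with flat $\R^4$. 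It is a non-zero element of $\Sym^4E$ with $\R^4$ in its nullity.

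So a single conformal Killing $2$-form gives only a nullity condition on $W$. Getting to $W=0$ needs a non-pointwise argument that propagates this condition over $M$, for instance via the second Bianchi identity for $W$, $\mathcal L_KW=0$ and the structure of $\nabla K$. It must also exploit the global hypotheses. None of this appears in your Step~3, and it is where the real work of David--Pontecorvo lies.

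Granting Steps~1--2, what your argument does prove is the symmetric case. There Proposition~\ref{prop:delta-iso} gives $W(K,\cdot)=0$ for every $K\in\g$, and these vectors span each tangent space, exactly as in \eqref{eq:bochner-nullity}. That is all Theorem~\ref{thm:conditional} needs, but it is not the statement as given.

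Two smaller points:
\begin{enumerate}
\item The third summand in Step~1 should be $\Lambda^2_0E\otimes\Sym^2H$. Since $\Lambda^2H$ is trivial, $\Lambda^2_0H=0$.
\item Simple connectivity is not a hypothesis, so you must first show $s>0$. For $s<0$ there are no non-zero Killing fields. For $s=0$ the Killing field $K=(\delta\psi)^\sharp$ is parallel, whence $\int|K|^2=\int\langle\psi,dK^\flat\rangle=0$. In both cases $\psi$ is Killing. Then use Salamon's theorem that compact quaternionic-K\"ahler manifolds of positive scalar curvature are simply connected.
\end{enumerate}
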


As mentioned in Section \ref{sec:preliminaries}, every Killing vector field on $\HP^q$
defines a non-parallel conformal Killing $2$-form (see also \cite{DavidPontecorvo}).

The prolongation connection constructed by David \cite{DavidProlongation} gives a particularly
transparent explanation of this rigidity.  Its curvature is controlled by the
quaternionic Weyl tensor and the prolongation is flat exactly when that tensor
vanishes.  Thus the quaternionic projective space is the maximally symmetric
quaternionic model for the conformal Killing $2$-form equation.

\section{The classification}

\subsection{The case of non-simple transvection algebra}\label{subsec:group-type}

We first remove the simplicity assumption on the transvection algebra.  Note that an
irreducible compact symmetric space has non-simple transvection algebra if and only if is of group type.

\begin{epr}\label{prop:group-type}
Let $M$ be a compact simply connected irreducible symmetric space whose transvection
algebra is not simple.  If $M$ is not a round sphere, then every conformal Killing
$2$-form on $M$ is parallel.
\end{epr}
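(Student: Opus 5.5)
Here $M=K$ is a compact simply connected simple Lie group with bi-invariant metric, $\g=\kalg\oplus\kalg$ and $\m\cong\kalg$ as a $K$-module via the adjoint representation. The plan is to rerun the argument of Sections 3--4 with the two simple ideals handled separately.

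\textbf{Step 1: reduction to the invariant $4$-form.} By \eqref{eq:killing-parallel} (valid since $M$ is not a sphere) and Lemma~\ref{lem:divergence-killing}, the map $\bar\delta$ of \eqref{eq:delta-injection} is an injective $G$-equivariant map into $\g=\kalg_1\oplus\kalg_2$. Its image is a $G$-invariant subspace, hence an ideal, so it is $0$, $\kalg_1$, $\kalg_2$ or $\g$. The image is also invariant under the geodesic symmetry $s_o$, which acts on $\g$ by the involution $\sigma$ swapping the two factors. Thus if the image is non-zero it must be all of $\g$, and Proposition~\ref{prop:delta-iso} remains true. The construction of $P$, $Q$, Lemma~\ref{lem:Q-four-form} and Proposition~\ref{prop:omega-condition} then go through verbatim, since their proofs only used the bijectivity of $\delta$ onto $\g$. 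The case $\Omega=0$ is again excluded because $\rho(\kalg)\neq\mathfrak{so}(\m)$. Hence, if a non-parallel conformal Killing $2$-form exists, there is a non-zero $\Omega\in(\Lambda^4\m^*)^K$ with $C_\Omega=\frac{3}{n-1}\mathrm{id}$ on $\rho(\kalg)^\perp$.

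\textbf{Step 2: the invariant $4$-forms vanish.} Invariant forms on $\m$ correspond to bi-invariant forms on the group, and these are harmonic. Hence $\dim(\Lambda^4\m^*)^K=b_4(K)$. For a compact simple simply connected Lie group the real cohomology is an exterior algebra on primitive generators of degrees $3$ and higher. In degree $4$ the only possible contribution would be a generator of degree $4$, but all generators have odd degree (the lowest being $3$, and the products start in degree $6$). Therefore $b_4(K)=0$.

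Equivalently, Proposition~2.10 of \cite{MoroianuSemmelmannFourForms} applies directly: the isotropy algebra $\kalg$ is simple here, so $(\Lambda^4\m^*)^K=0$, as in Lemma~\ref{lem:isotropy-not-simple}. This contradicts Step 1, so every conformal Killing $2$-form is parallel.

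\textbf{Main obstacle.} The only delicate point is Step 1, namely justifying that the image of $\bar\delta$ is all of $\g$ rather than a single factor. The $s_o$-invariance argument handles this, and it requires checking that the $L^2$-orthogonal complement of $\Pf^2(M)$ is preserved by isometries, which holds. An alternative route avoids the issue entirely. If the image were only $\kalg_1$, then $\psi_A$ would vanish for $A\in\kalg$ diagonal and $Q$ would still be defined. However, the $\sigma$-swap argument is cleaner, so I would use it.
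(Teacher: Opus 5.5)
Your proposal is correct and follows the paper's proof essentially step by step. The image of $\bar\delta$ is a non-zero ideal of $\g=\kalg\oplus\kalg$ that is invariant under the factor swap induced by $s_o$, hence all of $\g$. The construction of Section~4 then yields a non-zero $\Omega\in(\Lambda^4\m^*)^K$, which contradicts $(\Lambda^4\m^*)^K=0$ from \cite[Prop.~2.10]{MoroianuSemmelmannFourForms} since $\kalg$ is simple. Your alternative justification via $b_4(K)=0$ is also valid and self-contained, in the spirit of the paper's Betti-number argument in Lemma~\ref{lem:grassmann-fourforms}: primitive generators have odd degree and there is no degree-$1$ generator, so any product of generators has degree at least $6$.
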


\begin{proof}
By the structure theorem for irreducible compact symmetric pairs
\cite[Ch.~X]{Helgason}, there is a compact simple Lie algebra $\mathfrak h$ such
that
\[
 \g=\mathfrak h\oplus\mathfrak h,
 \qquad
 \kalg=\operatorname{diag}\mathfrak h,
 \qquad
 \m=\{(X,-X):X\in\mathfrak h\},
\]
and the symmetric involution exchanges the two simple ideals.  Geometrically, $M$ is
a compact simple Lie group with a bi-invariant metric.  The case
$\mathfrak h\simeq\mathfrak{su}(2)$ gives the round $3$-sphere and has already been
excluded.

Assume that a non-parallel conformal Killing $2$-form exists and consider
\[
 \mathcal I:=
 \bigl\{(\delta\psi)^\sharp:\psi\in
 \Pf^2(M)^\perp\cap\CK^2(M)\bigr\}\subset\g.
\]
As in Proposition~\ref{prop:delta-iso}, the kernel of the restriction of $\delta$ to $\Pf^2(M)^\perp\cap\CK^2(M)$
is zero by
\eqref{eq:killing-parallel}.  The space $\mathcal I$ is non-zero and
$G$-invariant, hence it is an ideal of
$\mathfrak h\oplus\mathfrak h$.  Thus it is one of
\[
 \mathfrak h\oplus0,
 \qquad 0\oplus\mathfrak h,
 \qquad \mathfrak h\oplus\mathfrak h.
\]

Now use the geodesic symmetry $s_o$ at the identity.  Pull-back by $s_o$ preserves
conformal Killing forms, parallel forms and the $L^2$-orthogonal complement of the
parallel forms, and $\delta$ is natural under isometries. 
Therefore $\mathcal I$ is invariant under the induced automorphism. On the other hand, this automorphism exchanges the two simple ideals of $\g$, so the only non-zero possibility is
\[
 \mathcal I=\mathfrak h\oplus\mathfrak h=\g.
\]
Thus the map $\delta$ is surjective also in the group-type case.

The construction of Section~4 uses only this surjectivity and equivariance.  It therefore
produces a non-zero invariant form
\[
 \Omega\in(\Lambda^4\m^*)^K
\]
satisfying condition \eqref{eq:omega-condition} of Proposition~\ref{prop:omega-condition}.  But here
$\kalg=\operatorname{diag}\mathfrak h\simeq\mathfrak h$ is simple.  By
\cite[Prop.~2.10]{MoroianuSemmelmannFourForms} one has
\[
 (\Lambda^4\m^*)^K=0,
\]
a contradiction.  Hence no non-parallel conformal Killing $2$-form exists.
\end{proof}

\subsection{Proof of Theorem \ref{thm:conditional}}
Combining the preceding discussion yields our classification theorem.
Consider first the case where $M$ is irreducible. 

The spherical case is one of the stated alternatives, so assume that $M$ is not a round
sphere.  If the transvection algebra $\g$ is not simple, Proposition~\ref{prop:group-type}
shows that every conformal Killing $2$-form is parallel, contrary to the hypothesis.
Thus $\g$ is simple and the arguments of Sections~3--5 apply.

If the invariant $4$-form $\Omega$ produced in Section~4 vanishes, the proof of
Proposition~\ref{prop:omega-condition} shows that $M$ has constant sectional curvature,
hence is a round sphere.  If $\Omega\ne0$, Lemma~\ref{lem:key} reduces the problem to
the Hermitian symmetric and quaternionic-K\"ahler cases.  In the Hermitian symmetric case, Proposition~\ref{prop:kahler-rigidity} shows that the only possibility is
$\CP^m$, while in the quaternionic-K\"ahler case, Theorem \ref{dc} shows that the only possibility is $\HP^q$.

Finally, the reducible case is easily shown to be impossible. Indeed, assume that $(M,g)=(M_1,g_1)\times (M_2,g_2)$ is a compact simply connected product manifold, and that $\psi$ is a conformal Killing 2-form on $(M,g)$ which is not Killing. Denote by $\pi_1:M\to M_1$ and $\pi_2:M\to M_2$ the standard projections. By 
\cite[Thm. 2.1]{MoroianuSemmelmannProducts}, conformal Killing forms on compact Riemannian products are
sums of parallel forms, pull-backs of Killing forms on the factors, and their Hodge
duals. We thus can write 
\[
\psi=\psi_0+\pi_1^*\psi_1+\pi_2^*\psi_2+*(\pi_1^*\varphi_1)+*(\pi_2^*\varphi_2),
\]
where $\psi_0$ is a parallel 2-form on $M$, $\psi_1$, $\psi_2$ are Killing $2$-forms on $M_1$, $M_2$, and $\varphi_1$, $\varphi_2$ are Killing $(n-2)$-forms on $M_1$, $M_2$ respectively.
In particular, since $\psi$ is not Killing, at least one of $\varphi_1$ or $\varphi_2$ is non-parallel. Assume that $\varphi_1$ is non-parallel. Then $\dim M_1\ge n-1$, so $\dim M_2 \le 1$. Since $M_2$ has positive dimension, this forces $\dim M_2=1$, which is impossible since there exists no compact simply connected 1-dimensional manifold. 

a

\appendix

\section{The fourth Betti number of Grassmannians}\label{app:grassmann-b4}

In this appendix we record the fourth rational Betti number of the oriented real,
complex and quaternionic Grassmannians. As strange as it might look, we were unable to find a convincing reference in the literature for this computation.

\begin{ath}\label{thm:grassmann-b4}
Let $1\leq k\leq n-1$ and $l=n-k$. Then the following formulas hold.

\begin{enumerate}
\item[(i)] For the oriented real Grassmannian,
\[
 b_4\left(\operatorname{Gr}^{+}_k(\mathbb R^n)\right)=
 \begin{cases}
  1+\delta_{k,4}+\delta_{l,4},& k,l\geq2,\\
  1,& \{k,l\}=\{1,4\},\\
  0,& \min\{k,l\}=1,\ \max\{k,l\}\neq4.
 \end{cases}
\]

\item[(ii)] For the complex Grassmannian,
\[
 b_4\left(\operatorname{Gr}_k(\mathbb C^n)\right)=
 \begin{cases}
  2,& k,l\geq2,\\
  1,& \min\{k,l\}=1,\ n\geq3,\\
  0,& n=2.
 \end{cases}
\]

\item[(iii)] For the quaternionic Grassmannian,
\[
 b_4\left(\operatorname{Gr}_k(\mathbb H^n)\right)=1.
\]
\end{enumerate}
\end{ath}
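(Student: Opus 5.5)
We compute the rational cohomology in degree $4$ via the classical description of the cohomology of compact homogeneous spaces $G/K$ with $\operatorname{rk}K=\operatorname{rk}G$: in that case $H^*(G/K;\mathbb Q)\cong H^*(BK;\mathbb Q)/(H^{+}(BG;\mathbb Q))$, i.e. the ring of $W_K$-invariant polynomials modulo the ideal generated by the positive-degree $W_G$-invariants (Borel). Since the ring is concentrated in even degrees, $b_4$ equals the dimension of the degree-$4$ part of this quotient. This is then a generators-and-relations count in degree $\le 4$ only, which we carry out family by family.

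For the complex Grassmannian $\U(n)/(\U(k)\times\U(l))$: $H^*(BK)=\mathbb Q[c_1,\dots,c_k,c_1',\dots,c_l']$ and the relations are the components of $c(E)c(E')=1$. In degree $2$ one relation $c_1+c_1'=0$; in degree $4$ one relation $c_2+c_1c_1'+c_2'=0$. Degree-$4$ monomials: $c_1^2,c_2,c_1c_1',c_1'^2,c_2'$, where $c_2$ exists iff $k\ge2$ and $c_2'$ iff $l\ge2$. The ideal in degree $4$ is spanned by $c_1(c_1+c_1')$, $c_1'(c_1+c_1')$ and the degree-$4$ relation, all independent. So $b_4=5-3=2$ for $k,l\ge2$, $4-3=1$ if exactly one of $k,l$ is $1$ ($n\ge3$), and $3-3=0$ for $n=2$ (where $\CP^1$ anyway has $b_4=0$). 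For the quaternionic Grassmannian, similarly $H^*(BSp(k))=\mathbb Q[p_1,\dots,p_k]$ with $\deg p_i=4i$; in degree $4$ the monomials are $p_1,p_1'$ and the single relation $p_1+p_1'=0$, giving $b_4=1$ always.

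The oriented real case is the main obstacle, since equal rank fails when $k,l$ are both odd, and Euler classes must be handled. If $k$ or $l$ is even, equal rank holds: $H^*(B\SO(k))$ is generated by Pontryagin classes $p_i$ and, for $k$ even, the Euler class $e_k$ of degree $k$ with $e_k^2=p_{k/2}$. In degree $4$ the generators are $p_1,p_1'$, plus $e_4$ if $k=4$ and $e_4'$ if $l=4$ (and for $k=2$ the class $e_2^2=p_1$, $e_2e_2'$ etc., to be checked); the relation is $p_1+p_1'=0$ from $p(E)p(E')=p(\R^n)=1$ (the Euler relation $e_ke_l'=e_n$ or $0$ only enters in degree $n\ge$ relevant cases, which I would check separately for small $n$, e.g. $n\le 5$). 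This yields $1+\delta_{k,4}+\delta_{l,4}$ for $k,l\ge2$ after careful treatment of $k=2$ (where $\operatorname{Gr}_2^+$ is the complex quadric, $b_4=1$ for $n\ge 5$ except $n=6$, consistent with $l=4$), and for $k=1$ the sphere $S^{n-1}$ gives $b_4=\delta_{n,5}$, i.e. $1$ iff $l=4$. When $k,l$ are both odd, I would reduce via the fibration or by using the rational homotopy equivalence $\operatorname{Gr}^+_k(\R^n)$ versus the model $H^*(B\SO(k)\times B\SO(l))/(p(E)p(E')-1)\otimes\Lambda(\text{one odd generator of degree }2n-2)$: the odd generator sits in degree $\ge 2n-2>4$ for $n\ge4$, so in degree $4$ the count is again $p_1,p_1'$ modulo $p_1+p_1'$, giving $1$ (and the $n\le3$ cases are spheres checked by hand). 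Verifying this Cartan-model claim in low degree (that no further relations appear in degree $\le 4$, i.e. the Pontryagin relation sequence is regular in low degrees) is the step needing most care.
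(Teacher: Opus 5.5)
Your argument is correct, but it reaches the numbers by a partly different route from the paper. In the equal-rank cases the paper uses Borel's Poincar\'e-series formula $P_{G/K}(t)=\prod_i(1-t^{2d_i(G)})/(1-t^{2d_i(K)})$. It only tracks how many basic invariants of polynomial degree $1$ and $2$ each factor has (the multiplicity $c(m)$), so it never needs the restriction map $H^*(BG)\to H^*(BK)$. You instead use Borel's ring presentation $H^*(BK)/(H^+(BG))$ and count the degree-$4$ part of the ideal via the Whitney sum formula. Both rest on the same theorem of Borel. Yours requires writing down the relations, including $e_2e_2'=0$ when $k=l=2$, which together with $e_2^2+e_2'^2=0$ leaves $b_4=1$. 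In return it exhibits explicit generators $p_1$, $e_4$, $e_4'$ of $H^4$.

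The genuine difference is the case $k,l$ both odd. The paper uses the homotopy exact sequence of $\SO(k)\times\SO(l)\to\SO(n)\to M$, together with $\pi_3(\SO(m))\otimes\mathbb Q=\mathbb Q$ and $\pi_4(\SO(m))\otimes\mathbb Q=0$. This shows $M$ is rationally $3$-connected with $\pi_4(M)\otimes\mathbb Q=\mathbb Q$, and Serre's rational Hurewicz theorem finishes. You use the Cartan model instead. That is more uniform and in principle yields the whole cohomology ring. The paper's argument is more elementary but works only because $M$ happens to be rationally $3$-connected.

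Two corrections to your odd--odd step. First, the surviving odd generator is the transgression partner of the Euler class $e_n\in H^n(B\SO(n))$, whose restriction to $B(\SO(k)\times\SO(l))$ vanishes for $k,l$ odd. It therefore has degree $n-1$, not $2n-2$ (which is even and so could not be an odd generator). Since $n\ge 6$ here, $n-1\ge 5$ and your conclusion is unaffected.

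Second, the regularity question you single out as the delicate point is not needed in degree $4$. In the Cartan complex $H^*(BK)\otimes\Lambda(x_3,x_7,\dots,x_{2n-5},x_{n-1})$, with $d=0$ on $H^*(BK)$, we have $H^1(BK)=H^2(BK)=H^3(BK)=0$. Hence the only degree-$3$ cochain is $x_3$, and the degree-$4$ cochains are spanned by $p_1,p_1'$, all of them cocycles. Since $dx_3=p_1+p_1'$, you get $b_4=1$ directly from Cartan's theorem, with no appeal to formality or to the structure of the full quotient ring.
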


\begin{proof}
We first recall Borel's equal-rank formula
\cite[Thm.~26.1(c), p.~191]{Borel1953}.  Let $G$ be a compact connected Lie
group and $K\subset G$ a connected closed subgroup of the same rank.  If
$d_1(G),\ldots,d_r(G)$ and $d_1(K),\ldots,d_r(K)$ are the degrees of homogeneous
generators of the corresponding Weyl-group invariant polynomial algebras, then
\begin{equation}\label{eq:Borel-Poincare-appendix}
 P_{G/K}(t)=
 \prod_{i=1}^{r}\frac{1-t^{2d_i(G)}}{1-t^{2d_i(K)}}.
\end{equation}
Note that Borel in his formula uses the cohomological degrees $s_i$ which are related 
to the polynomial degrees $d_i$ by $s_i =2d_i$.

For $\U(m)$ the degrees are $1,2,\ldots,m$. Hence
\[
 P_{\operatorname{Gr}_k(\mathbb C^n)}(t)
 =
 \frac{\prod_{j=1}^{n}(1-t^{2j})}
 {\prod_{j=1}^{k}(1-t^{2j})\prod_{j=1}^{l}(1-t^{2j})}.
\]
Up to degree $4$,
\[
 P_{\operatorname{Gr}_k(\mathbb C^n)}(t)
 =
 \begin{cases}
  1+t^2+2t^4+O(t^6),&k,l\geq2,\\
  1+t^2+t^4+O(t^6),&\min\{k,l\}=1,\ n\geq3.
 \end{cases}
\]
For $n=2$ one has $\operatorname{Gr}_1(\mathbb C^2)=\CP^1$. This proves (ii).

For $\Sp(m)$ the degrees are $2,4,\ldots,2m$. Consequently
\[
 P_{\operatorname{Gr}_k(\mathbb H^n)}(t)
 =
 \frac{\prod_{j=1}^{n}(1-t^{4j})}
 {\prod_{j=1}^{k}(1-t^{4j})\prod_{j=1}^{l}(1-t^{4j})}
 =
 1+t^4+O(t^8),
\]
which proves (iii).

We now turn to the oriented real Grassmannian and consider first the case where $k$ and $l$ are not both odd. Since
\[
 \operatorname{rk}\SO(m)=\left\lfloor\frac m2\right\rfloor,
\]
the groups $\SO(n)$ and $\SO(k)\times\SO(l)$ have the same rank in these cases, hence \eqref{eq:Borel-Poincare-appendix} applies.

Only the degrees $d_i=1$ and $d_i=2$ can contribute to the Poincar\'e
polynomial up to degree $4$, since a basic invariant of degree $d_i$ contributes
a factor $1-t^{2d_i}$ in \eqref{eq:Borel-Poincare-appendix}. For $\SO(m)$, the
degree $1$ occurs only for $m=2$. The multiplicity of the degree $2$ among the
basic Weyl-group invariant degrees is
\[
 c(m):=\#\{i\mid d_i(\SO(m))=2\}
 =
 \begin{cases}
  0,&m\leq2,\\
  2,&m=4,\\
  1,&m\geq3,\ m\neq4.
 \end{cases}
\]
If $k,l\geq3$ and are not both odd, 
the groups $\SO(k+l)$ and $\SO(k) \times \SO(l)$
have the same rank, so Borel's Poincar\'e-polynomial formula
gives
\[
 P_{\operatorname{Gr}^{+}_k(\mathbb R^n)}(t)
 =
 (1-t^4)^{\,c(n)-c(k)-c(l)}+O(t^6).
\]
Since $n\geq6$, one has $c(n)=1$, and therefore
\[
 b_4=c(k)+c(l)-1=1+\delta_{k,4}+\delta_{l,4}.
\]

Suppose next that $k=2$ and $l\geq3$. Then
\[
 P_{\operatorname{Gr}^{+}_2(\mathbb R^{l+2})}(t)
 =
 (1-t^2)^{-1}(1-t^4)^{\,c(l+2)-c(l)}+O(t^6),
\]
so
\[
 b_4=1+c(l)-c(l+2)=1+\delta_{l,4}.
\]
For $k=l=2$,
\[
 P_{\operatorname{Gr}^{+}_2(\mathbb R^4)}(t)
 =
 (1-t^2)^{-2}(1-t^4)^2
 =
 1+2t^2+t^4+O(t^6),
\]
and hence $b_4=1$. Thus every equal-rank real case with $k,l\geq2$
satisfies
\[
 b_4=1+\delta_{k,4}+\delta_{l,4}.
\]
If one of $k,l$ equals $1$, then
\[
 \operatorname{Gr}^{+}_1(\mathbb R^n)\cong S^{n-1},
\]
which gives the remaining cases in (i).

It remains to consider the real Grassmannians for which $k$ and $l$ are both odd
and at least $3$. In this case 
\[
 \operatorname{rk}\bigl(\SO(k)\times\SO(l)\bigr)
 =
 \operatorname{rk}\SO(k+l)-1.
\]
so Borel's Poincar\'e-polynomial formula does not hold.
We replace it by a homotopy argument.

 For odd $m\geq3$,
\[
 \pi_3(\SO(m))\otimes\mathbb Q\cong\mathbb Q,
 \qquad
 \pi_4(\SO(m))\otimes\mathbb Q=0 \, ,
\]
see  \cite[\S\S~22--23, pp.~116ff.]{Steenrod}.

Let $M:=\operatorname{Gr}^{+}_k(\mathbb R^n)$. We will use long exact homotopy sequence associated to the fibration
\[
 \SO(k)\times\SO(l)\longrightarrow\SO(n)\longrightarrow M \, ,
\]
 The two block inclusions of $\SO(k)$ and $\SO(l)$ into $\SO(k+l)$ induce non-zero maps at the level of $\pi_3\otimes\mathbb Q$, so after rescaling the two rational generators, the relevant part of the long exact
sequence reads
\[
 0\longrightarrow \pi_4(M)\otimes\mathbb Q
 \longrightarrow \mathbb Q\oplus\mathbb Q
 \xrightarrow{(a,b)\mapsto a+b}
 \mathbb Q
 \longrightarrow \pi_3(M)\otimes\mathbb Q
 \longrightarrow0.
\]
It follows that
\[
 \pi_4(M)\otimes\mathbb Q\cong\mathbb Q,
 \qquad
 \pi_3(M)\otimes\mathbb Q=0.
\]
Moreover, $M$ is simply connected and $\pi_2(M)$ is finite. Hence the rational
Hurewicz Theorem, in the form of
Serre~\cite[Chapter~III, Theorem~1]{Serre1953}, yields
\[
 H_4(M;\mathbb Q)\cong\pi_4(M)\otimes\mathbb Q\cong\mathbb Q.
\]
Thus $b_4(M)=1$, completing the proof of (i).
\end{proof}

\labelsep .5cm


\begin{thebibliography}{22}

\bibliographystyle{alpha}

\bibitem{Borel1953}
A.~Borel,
\emph{Sur la cohomologie des espaces fibr\'es principaux et des espaces
homog\`enes de groupes de Lie compacts},
Ann. of Math. (2) \textbf{57} (1953), 115--207.



\bibitem{Serre1953}
J.-P.~Serre,
\emph{Groupes d'homotopie et classes de groupes ab\'eliens},
Ann. of Math. (2) \textbf{58} (1953), 258--294.


\bibitem{ACG}
V.~Apostolov, D.~M.~J. Calderbank and P.~Gauduchon,
\emph{Hamiltonian $2$-forms in K\"ahler geometry. I. General theory},
J. Differential Geom. \textbf{73} (2006), 359--412.


\bibitem{CalderbankMatveevRosemann}
D.~M.~J. Calderbank, V.~S. Matveev and S.~Rosemann,
\emph{Curvature and the c-projective mobility of K\"ahler metrics with Hamiltonian $2$-forms},
Compos. Math. \textbf{152} (2016), 1555--1575.


\bibitem{BelgunMoroianuSemmelmann}
F.~Belgun, A.~Moroianu and U.~Semmelmann,
\emph{Killing forms on symmetric spaces},
Differential Geom. Appl. \textbf{24} (2006), 215--222.


\bibitem{DavidProlongation}
L.~David,
\emph{A prolongation of the conformal-Killing operator on quaternionic-K\"ahler manifolds},
Ann. Mat. Pura Appl. (4) \textbf{191} (2012), 595--610.


\bibitem{DavidPontecorvo}
L.~David and M.~Pontecorvo,
\emph{A characterization of quaternionic projective space by the conformal-Killing equation},
J. Lond. Math. Soc. (2) \textbf{80} (2009), 326--340.


\bibitem{Helgason}
S.~Helgason,
\emph{Differential Geometry, Lie Groups, and Symmetric Spaces},
Graduate Studies in Mathematics, vol.~34, American Mathematical Society, Providence, RI, 2001.

\bibitem{MoroianuSemmelmannKahler}
A.~Moroianu and U.~Semmelmann,
\emph{Twistor forms on K\"ahler manifolds},
Ann. Sc. Norm. Super. Pisa Cl. Sci. (5) \textbf{2} (2003), 823--845.


\bibitem{MoroianuSemmelmannProducts}
A.~Moroianu and U.~Semmelmann,
\emph{Twistor forms on Riemannian products},
J. Geom. Phys. \textbf{58} (2008), 1343--1345.


\bibitem{MoroianuSemmelmannFourForms}
A.~Moroianu and U.~Semmelmann,
\emph{Invariant four-forms and symmetric pairs},
Ann. Global Anal. Geom. \textbf{43} (2013), 107--121.



\bibitem{NagySemmelmannKahler}
P.-A.~Nagy and U.~Semmelmann,
\emph{Conformal Killing forms in K\"ahler geometry},
Illinois J. Math. \textbf{66} (2022), no.~3, 349--384.


\bibitem{NaveiraSemmelmann}
A.~M. Naveira and U.~Semmelmann,
\emph{Conformal Killing forms on nearly K\"ahler manifolds},
Differential Geom. Appl. \textbf{70} (2020), 101628.


\bibitem{Semmelmann}
U.~Semmelmann,
\emph{Conformal Killing forms on Riemannian manifolds},
Math. Z. \textbf{245} (2003), 503--527.

\bibitem{Steenrod}
N.~Steenrod,
\emph{The Topology of Fibre Bundles},
Princeton Mathematical Series, Vol.~14,
Princeton University Press, Princeton, 1951.


\bibitem{Ta69}
S. Tachibana,
{\sl On conformal Killing tensor in a Riemannian space},
Tohoku Math. J. (2) {\bf 21} (1969), 56--64.

\end{thebibliography}
\end{document}